\newcommand{\cB}{\mathcal{B}}
\newcommand{\cD}{\mathcal{D}}
\newcommand{\cL}{\mathcal{L}}
\newcommand{\cP}{\mathcal{P}}
\newcommand{\ZZ}{\mathbb{Z}}
\newcommand{\PG}{\mathrm{PG}}
\newcommand{\gauss}[2]{\genfrac{[}{]}{0pt}{}{#1}{#2}\@ifnextchar\bgroup{_}{}}
\newcommand{\Sym}{\mathrm{Sym}}
\newcommand{\0}{\color{black!20}0}
\newcommand{\V}{V(\Gamma)}
\newcommand{\1}{\mathbb{1}}
\newcommand{\<}{\langle}
\renewcommand{\>}{\rangle} 
\newtheorem{theorem}{Theorem}
\newtheorem{corollary}[theorem]{Corollary}
\newtheorem{lemma}[theorem]{Lemma}
\newtheorem{prop}[theorem]{Proposition}
\theoremstyle{definition}
\newtheorem{example}[theorem]{Example}
\newtheorem{remark}[theorem]{Remark}
\title{Design switching on graphs}
\author{Ferdinand Ihringer and Robin Simoens}
\date{}
\begin{document}

\maketitle

\begin{abstract}
We show that each $(r, \lambda)$-design yields a class of switching methods that can be used to produce cospectral graphs. We use this to explain several specific switching methods such as Godsil-McKay (GM) switching and Wang-Qiu-Hu (WQH) switching.
\end{abstract}

\section{Introduction}


One of the central problems in spectral graph theory is to decide whether a graph is determined by its spectrum.
Haemers' conjecture \cite{HaemersDam2002,Haemers2016} says that almost all graphs are determined
by their spectrum. This claim is backed up by several computer experiments \cite{BrouwerSpence,WangWang},
but a proof seems out of reach. In contrast to Haemers' conjecture for the class of all graphs, we know that, for particular types of graphs, almost all of them (within that class) are not determined by their spectrum. Examples are trees \cite{GM,Schwenk}, strongly regular graphs \cite{FonDerFlaas} and cographs \cite{WangHuang}.

A very successful technique in this context has been the method of \textit{switching}.
Informally speaking, a switching for graph is a local modification of the graph from which we obtain a new graph with some properties left invariant. We are concerned
with switchings which produce \textit{cospectral graphs}: graphs with the same spectrum. Two of the most successful
techniques in this context are \textit{Godsil-McKay (GM) switching} \cite{GM} and \textit{Wang-Qiu-Hu (WQH) switching} \cite{WQH}. Recently, more
switchings in this vibe have been defined, for instance \textit{Abiad-Haemers (AH) switching} \cite{AH}. The first goal of this work is to present a unified framework for these switching techniques.

We use the language of \((r,\lambda)\)-designs.
An \emph{\((r,\lambda)\)-design} is an incidence structure \((\cP,\cB)\) such that
\begin{itemize}
    \item every point is contained in exactly \(r\) blocks, and
    \item every pair of points is included in exactly \(\lambda\) blocks.
\end{itemize}
Let \(J_v\) denote the $v\times v$ all-one-matrix.
Our main result is the following.

\begin{theorem}[Design switching]
\label{thm:main}
    Consider two \((r,\lambda)\)-designs \(\cD_1\) and \(\cD_2\) on \(v\) points and \(b\) blocks, whose respective incidence matrices $N_1$ and $N_2$ satisfy \(N_1^TN_1=N_2^TN_2\).
    Define $R=\frac1{r-\lambda}\left(N_1N_2^T-\lambda J_v\right)$.
    Let \(\Gamma\) be a graph with a subgraph \(C\) whose \(v\) vertices are identified both with the points of \(\cD_1\) and those of \(\cD_2\).
    Suppose that:
    \begin{enumerate}[(i)]
        \item The adjacency matrix $A_C$ of $C$ has the property that $R^TA_CR$ is an adjacency matrix.
        \item For every vertex \(x\notin C\), its neighbours in \(C\) form a block of \(\cD_1\).
    \end{enumerate}
    Make every vertex \(x\notin C\) that is adjacent to the points of the \(i\)th block of \(\cD_1\), adjacent to the points of the \(i\)th block of \(\cD_2\). Replace \(C\) by the subgraph with adjacency matrix $R^T A_C R$. The resulting graph is \(\mathbb{R}\)-cospectral with \(\Gamma\).
\end{theorem}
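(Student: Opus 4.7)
My plan is to produce an orthogonal $v\times v$ matrix---the $R$ in the statement is the natural candidate---extend it block-diagonally to
\[
Q=\begin{pmatrix} R & 0 \\ 0 & I\end{pmatrix},
\]
and show that $Q^TA(\Gamma)Q$ equals the adjacency matrix of the switched graph while $Q\1=\1$; $\mathbb{R}$-cospectrality then follows because the extra condition $QJQ^T=J$ lets orthogonal similarity commute with adding any multiple of $J$. Order the vertices of $\Gamma$ so those of $C$ come first. By hypothesis (ii) the $C$-to-outside block of $A(\Gamma)$ has the form $N_1P$ for some $\{0,1\}$-matrix $P$ recording which block of $\cD_1$ is the neighbourhood in $C$ of which external vertex. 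The switching rule therefore produces
\[
A'=\begin{pmatrix} R^TA_CR & N_2P \\ P^TN_2^T & B \end{pmatrix},
\]
where $B$ is the adjacency submatrix outside $C$, inherited unchanged from $\Gamma$. The statement reduces to the three identities (a) $R^TR=I$, (b) $R^TN_1=N_2$, and (c) $R\1=\1$.

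\textbf{Key calculations.} All three identities rest on two inputs: the design equations $N_iN_i^T=(r-\lambda)I_v+\lambda J_v$, immediate from the $(r,\lambda)$-design axioms, and the observation that equality of the diagonals of $N_1^TN_1$ and $N_2^TN_2$ forces $\cD_1$ and $\cD_2$ to share a common block-size vector $\mathbf k:=N_1^T\1=N_2^T\1$. The latter gives $J_vN_1=J_vN_2=\1\mathbf k^T$ and $N_1\mathbf k=N_1N_1^T\1=(r-\lambda+\lambda v)\1$. Identity (c) is then a one-line computation. For (b), I would rewrite $R^TN_1=\tfrac{1}{r-\lambda}(N_2N_1^TN_1-\lambda J_vN_1)$, substitute $N_1^TN_1=N_2^TN_2$, and expand using the design equation. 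For (a), I would expand $RR^T$ and verify that the $J_v$-coefficients (from $N_1N_2^TN_2N_1^T$, the two cross terms, and $J_v^2=vJ_v$) cancel.

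\textbf{Main obstacle.} The delicate step is (a): the coefficient of $J_v$ picks up contributions from four places and the cancellation is precisely the numerical coincidence that makes the definition of $R$ orthogonal; identifying the cross terms cleanly depends on the equality of block-size vectors noted above. Once (a)--(c) are in place, hypothesis (i) plays no role in the cospectrality argument itself; it is invoked only at the end to guarantee that the switched graph genuinely is a graph (i.e.\ that $R^TA_CR$ is a $\{0,1\}$-matrix with zero diagonal). Since $R\1=\1$ implies $Q\1=\1$, the orthogonal similarity $Q^TA(\Gamma)Q=A'$ upgrades to $\mathbb{R}$-cospectrality, as desired.
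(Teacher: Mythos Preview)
Your proposal is correct and follows essentially the same route as the paper: set up the block decomposition of the adjacency matrices, prove that $R$ is a regular orthogonal matrix, and verify that it carries the off-diagonal block $V_1$ to $V_2$. The only noteworthy difference is that you prove the key step $R^TN_1=N_2$ by a clean matrix computation (substituting $N_1^TN_1=N_2^TN_2$ and using $J_vN_1=J_vN_2=\1\mathbf{k}^T$), whereas the paper establishes the same identity column-by-column via a combinatorial double count of $\{(p,B)\in\cP\times\cB_2:\, p\neq p_j\in B\neq B_{2i},\ p\in B_{2i}\cap B\}$; these are two presentations of the same calculation.
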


Let us comment on this result. 
\begin{remark}\label{remark:main}
    \begin{enumerate}[(1)]
        \item We allow designs with repeated blocks (non-simple designs) and blocks may have different sizes. In particular, one can always add the empty block and the block containing all points to both designs, without changing the matrix \(R\). Hence a vertex \(x\notin C\) may also be adjacent to all vertices of \(C\) or no vertices of \(C\). Adding the complement of every block to the design, preserves \(R\), so condition (ii) can be relaxed to:
        \begin{enumerate}
            \item[(ii')] Every vertex \(x\notin C\) is adjacent to
            \begin{enumerate}[(a)]
                \item all vertices of \(C\), or
                \item no vertices of \(C\), or
                \item the vertices of \(C\) contained in a block of \(\cD_1\), or
                \item the vertices of \(C\) not contained in a block of \(\cD_1\).
            \end{enumerate}
        \end{enumerate}
        \item The condition \(N_1^TN_1=N_2^TN_2\) on the incidence matrices can be stated combinatorially as follows: the blocks are ordered in such a way that \(|B_{1i}\cap B_{1j}|=|B_{2i}\cap B_{2j}|\) for any two (possibly the same) blocks \(B_{1i}\) and \(B_{1j}\) of \(\cD_1\) and their corresponding blocks \(B_{2i}\) and \(B_{2j}\) of \(\cD_2\). In particular, \(|B_{1i}|=|B_{2i}|\) for any two corresponding blocks \(B_{1i}\) of \(\cD_1\) and \(B_{2i}\) of \(\cD_2\).

        Note that this is also a necessary condition: in general, for a switching method to work, any two vertices that are not contained in $C$ must have the same number of common neighbours in $C$ before and after switching. Algebraically, this can be explained by: \(x^Ty=(R^Tx)^T(R^Ty)\) if \(R\) is orthogonal.
        \item Condition (i) is automatically fulfilled when the induced subgraph on $C$ is edgeless or complete. If so, then after switching it is edgeless or complete again.
    \end{enumerate}
\end{remark}

A special case of Theorem~\ref{thm:main} occurs when \(\cD_1=\cD_2\) and the blocks are permuted.

\begin{corollary}\label{cor:samedesign}
    Let \(\Gamma\) be a graph with a subgraph \(C\) whose vertices are identified with points of an \((r,\lambda)\)-design with incidence matrix \(N\). Let \(\pi\) be a permutation of the blocks such that for all blocks \(B_i\) and \(B_j\), \[|B_i\cap B_j|=|\pi(B_i)\cap\pi(B_j)|.\] Define $R= \frac1{r-\lambda}\left(N(N^\pi)^T-\lambda J_v\right)$, where \(N^\pi\) is obtained from \(N\) by replacing the \(i\)th column by the \(\pi(i)\)th column, for every \(i\). Suppose that:
    \begin{enumerate}[(i)]
        \item The adjacency matrix $A_C$ of $C$ has the property that $R^T A_C R$ is an adjacency matrix.
        \item For every vertex \(x\notin C\), its neighbours in \(C\) form a block.
    \end{enumerate}
    Make every vertex \(x\notin C\) that is adjacent to the points of \(B\), adjacent to the points of \(\pi(B)\). Replace \(C\) by the subgraph with adjacency matrix $R^T A_C R$. The resulting graph is \(\mathbb{R}\)-cospectral with \(\Gamma\).
\end{corollary}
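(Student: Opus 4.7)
The plan is to realize Corollary~\ref{cor:samedesign} as a direct instance of Theorem~\ref{thm:main} by taking $\cD_1:=\cD$ and letting $\cD_2$ be the same design with its blocks reordered by $\pi$. Concretely, if $B_1,\dots,B_b$ denote the blocks of $\cD$ in the order corresponding to the columns of $N$, then $\cD_2$ is the $(r,\lambda)$-design on the same point set whose $i$th block is $\pi(B_i)$. Its incidence matrix is precisely $N^\pi$ as defined in the corollary, since column $i$ of $N^\pi$ is the characteristic vector of $\pi(B_i)$, i.e.\ of the $i$th block of $\cD_2$.

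Next, I would verify each hypothesis of Theorem~\ref{thm:main}. The equality $N_1^TN_1=N_2^TN_2$ translates entry-by-entry into $|B_i\cap B_j|=|\pi(B_i)\cap\pi(B_j)|$, which is exactly the standing assumption on $\pi$. The matrix $R$ in the corollary coincides with the one in the theorem, because $N_1N_2^T=N(N^\pi)^T$. Condition (i) is literally identical in the two statements, and condition (ii) matches as well because $\cD_1=\cD$, so neighbourhoods forming a block of $\cD$ is the same as neighbourhoods forming a block of $\cD_1$.

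Finally, under the identification $\cD_2=\pi(\cD_1)$, the switching rule of Theorem~\ref{thm:main} (``a vertex whose $C$-neighbourhood equals the $i$th block of $\cD_1$ is made adjacent to the $i$th block of $\cD_2$'') becomes the rule ``$B\mapsto\pi(B)$'' of the corollary; the replacement of $C$ by the graph with adjacency matrix $R^TA_CR$ is identical in both. An application of Theorem~\ref{thm:main} then produces a graph $\mathbb{R}$-cospectral with $\Gamma$, which is the conclusion we want. The only thing that requires a bit of care is a notational consistency check — making sure the column-indexing convention used to define $N^\pi$ matches the block ordering we chose for $\cD_2$ — but no substantive obstacle arises, as the corollary really is a repackaging of the theorem for the case of a single design permuted by $\pi$.
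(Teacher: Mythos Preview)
Your proposal is correct and is exactly the approach the paper takes: the corollary is stated immediately after Theorem~\ref{thm:main} as the special case $\cD_1=\cD_2$ with the blocks permuted by $\pi$, and no separate proof is given. Your write-up simply makes explicit the verifications (that $N_2=N^\pi$, that $N_1^TN_1=N_2^TN_2$ unpacks to the intersection condition on $\pi$, and that the switching rules coincide) which the paper leaves implicit.
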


Although we recover many of the previously known switching methods, not all methods can be obtained in this way. In Section~\ref{sec:notallswitchings}, we present a new switching method that does not fall under this unification.

There are plenty of constructions of strongly regular graphs from known strongly regular graphs by local modifications \cite{BIK,I,VanDamKoolen,Wallis}.
Often, ``linear algebra'' explanations in terms of switching make constructions more elegant and the underlying general principles clearer than combinatorial proofs. For instance, compare \cite{I} and \cite{IM} or \cite{VanDamKoolen} and \cite{MunemasaTwisted}.
Our result has the right generality to do so for other cases as well. In particular, we give a linear algebra explanation for why the switching method due to Kantor \cite{BIK,KAlone} gives cospectral graphs. There is also a clear connection with the concept of a \textit{trade} in a design, see \cite{Billington2003,KMT,Ostergard}, which can be seen as a switching method to construct new designs. Using Theorem~\ref{thm:main}, we give some examples of how to construct cospectral graphs using trades in Section~\ref{sec:appl}.\\

This paper is structured as follows. In Section \ref{sec:prelim} we give preliminaries
before proving Theorem~\ref{thm:main} in Section \ref{sec:main}.
In Section \ref{sec:ana} we analyse design switching in detail for various particular designs.
In particular, we recover GM-, WQH-, and AH-switching.
In Section \ref{sec:notallswitchings} we show that not all cospectrality results for graphs
can be explained using design switching.
In Section \ref{sec:appl}, we give a few applications for design switching.
In particular, we give the announced spectral explanation for Kantor's switching method in \cite{BIK}.


\section{Preliminaries} \label{sec:prelim}

Let $O_n$, $I_n$ and $J_n$ denote the $n\times n$ null matrix, identity matrix and all-one matrix respectively. If the size is clear, indices are omitted. A square matrix \(Q\) is \emph{orthogonal} if \(QQ^T=I\). An orthogonal matrix \(Q\) is \emph{regular} if \(QJ=J\). A square matrix is \emph{decomposable} if, after permutations of its rows and columns, it can be written as a block diagonal matrix with at least two non-identity blocks. The \emph{level} of a matrix \(Q\) is the smallest integer \(\ell\) such that \(\ell Q\) is integral, or \(\infty\) if \(Q\) is irrational.

The \emph{spectrum} of a graph is the multiset of eigenvalues of its adjacency matrix. Two graphs are \emph{cospectral} if they have the same spectrum. Two graphs with adjacency matrices \(A\) and \(A'\) are \emph{\(\mathbb{R}\)-cospectral} if \(A+rJ\) and \(A'+rJ\) have the same spectrum for all \(r\in\mathbb{R}\). Two graphs are \(\mathbb{R}\)-cospectral if and only if they are cospectral and their complements are cospectral, if and only if their adjacency matrices are conjugated by a regular orthogonal matrix \cite{JN}.

A \emph{strongly regular graph with parameters $(v, k, \lambda, \mu)$} is a $k$-regular graph (not complete and not edgeless) with $v$ vertices such that each pair of adjacent
vertices has precisely $\lambda$ common neighbours and each pair of non-adjacent vertices has precisely $\mu$ common neighbours. If two strongly regular graphs have the same parameters, then they are cospectral \cite{BvM}.

In this work, a \emph{switching method} is a local graph transformation, resulting in a cospectral graph. It needs a \emph{switching set}, a subset of the vertex set of the graph, together with some conditions. The size of the switching set should be constant in order to keep the operation local. Recall that two real symmetric matrices are cospectral if and only if they are conjugated by an orthogonal matrix. Thus in the case of a switching method, we want the transformation to correspond to a conjugation with an orthogonal matrix \[Q=Q(R)=\begin{pmatrix}R&O\\O&I\end{pmatrix},\] where the order of \(R\) is a constant, while the order of the graph (and thus the matrix \(Q\)) can be made arbitrarily large.

A switching method is \emph{reducible} to certain other methods if it can be obtained by a sequence of them.
Algebraically, a switching method that corresponds to a conjugation of the adjacency matrix with \(Q\) is reducible to the switching methods corresponding to conjugations with \(Q_1,\dots,Q_s\) if \(Q=Q_1\cdots Q_s\) and, for every adjacency matrix \(A\) such that \(Q^TAQ\) is again an adjacency matrix and for every \(i<s\), the matrix \((Q_1\cdots Q_i)^TA(Q_1\cdots Q_i)\) is an adjacency matrix. The notion of reducibility was introduced in \cite[Definition~10]{ABS} for the special case when \(Q\) has level 2.

An \emph{incidence structure} is a tuple \((\cP,\cB)\) where \(\cP\) is a set and \(\mathcal{B}\) is a multiset of subsets of \(\mathcal{P}\). A \emph{point} is an element of \(\cP\) and a \emph{block} is an element of \(\cB\). If \(\cB\) is a set, then the incidence structure is \emph{simple}. The \emph{incidence matrix} of an incidence structure is a matrix \(N\) indexed by its points and blocks as rows and columns respectively, such that \((N)_{pB}=1\) if \(p\in B\) and \((N)_{pB}=0\) if \(p\notin B\).

An \emph{\((r,\lambda)\)-design} is an incidence structure \((\cP,\cB)\) such that
\begin{itemize}
    \item every point is contained in exactly \(r\) blocks, and
    \item every pair of points is included in exactly \(\lambda\) blocks.
\end{itemize}
For more information on \((r,\lambda)\)-designs, see \cite[Section~VI.49]{handbook}.
A \emph{balanced incomplete block design} or \emph{$2$-$(v,k,\lambda)$-design} is an \((r,\lambda)\)-design with $v$ points where each block has size $k$.
If $k=r$, the design is \textit{symmetric}, in which case any two blocks intersect in precisely $\lambda$ points.

\section{Proof of the main theorem}\label{sec:main}

We need the following observations for the proof of Theorem~\ref{thm:main}.

\begin{lemma}\label{lemma:design}
    Consider an \((r,\lambda)\)-design $(\cP,\cB)$ with \(v\) points and incidence matrix \(N\).
    \begin{enumerate}[(i)]
        \item \(NN^T=\lambda J_v+(r-\lambda)I_v\).
        \item For every point \(p\in\cP\), we have \(\sum_{\substack{B\in\cB\\ p\in B}}|B|=r+(v-1)\lambda\).
    \end{enumerate}
\end{lemma}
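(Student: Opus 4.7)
For (i), the plan is to compute the $(p,q)$-entry of $NN^T$ directly from the definition. We have
\[
(NN^T)_{pq} = \sum_{B\in\cB} N_{pB} N_{qB} = |\{B\in\cB : p,q\in B\}|.
\]
When $p=q$ this count is $r$ by the first design axiom, and when $p\neq q$ it is $\lambda$ by the second. Hence $NN^T = r I_v + \lambda(J_v - I_v) = \lambda J_v + (r-\lambda)I_v$.

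For (ii), my plan is to deduce this from (i) by applying $NN^T$ to the all-ones vector $\1$. The key observation is that $|B| = \sum_{p\in\cP} N_{pB}$, so $N^T\1$ is the column vector of block sizes. Therefore
\[
\sum_{\substack{B\in\cB\\p\in B}} |B| \;=\; \sum_{B\in\cB} N_{pB} \cdot (N^T\1)_B \;=\; (NN^T\1)_p.
\]
Using (i), $NN^T\1 = (\lambda J_v + (r-\lambda)I_v)\1 = (\lambda v + r - \lambda)\1 = (r + (v-1)\lambda)\1$, which gives the claim.

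Both parts are short and essentially bookkeeping; there is no real obstacle. The only mild subtlety is in (ii), namely recognising that the sum on the left is exactly the $p$-th coordinate of $NN^T\1$, which lets us reuse (i) instead of giving a separate double-counting argument. For completeness one can alternatively prove (ii) by counting incident pairs $(B,q)$ with $p\in B$ and $q\in B$ in two ways, but the matrix route is cleaner and directly exploits (i).
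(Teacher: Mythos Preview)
Your proof is correct. Part (i) is exactly the paper's argument (``by definition''). For (ii), the paper instead double-counts the set $\{(q,B)\in\cP\times\cB:\, p,q\in B\}$ directly: fixing $B\ni p$ gives $|B|$ choices for $q$, while fixing $q$ gives $r$ blocks if $q=p$ and $\lambda$ blocks otherwise. Your matrix route via $NN^T\1$ is a clean repackaging of the same count that has the small advantage of reusing (i) rather than repeating its content; you already note the double-counting alternative yourself, and the two arguments are essentially equivalent.
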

\begin{proof}
    \begin{enumerate}[(i)]
        \item By definition.
        \item By double counting of $\{(q,B)\in\cP\times\cB\colon\,p,q\in B\}$. \qedhere
    \end{enumerate}
\end{proof}

We are now ready to prove the main theorem.

\begin{proof}[Proof of Theorem~\ref{thm:main}]
    Let \(\cP=\{p_1,\dots,p_v\}\) denote the vertex set of \(C\), so \(\cP\) is equal to the point set of both designs. Denote their block multisets by \(\cB_1=\{B_{11},\dots,B_{1b}\}\) and \(\cB_2=\{B_{21},\dots,B_{2b}\}\) such that the columns of \(N_1\) and \(N_2\) are labelled in the same order. 
    From \(N_1^TN_1=N_2^TN_2\) and Lemma~\ref{lemma:design}, it follows
    that \(R^TR=I_v\) and \(RJ_v=J_v\), so $R$ is a regular orthogonal matrix.
    Label the vertices of \(\Gamma\) in such a way that its adjacency matrix $A_1$ and the adjacency matrix of the new graph $A_2$ have block form
    \[A_1=\begin{pmatrix}A_{1C}&V_1\\V_1^T&W\end{pmatrix}\text{ and }A_2=\begin{pmatrix}A_{2C}&V_2\\V_2^T&W\end{pmatrix}\] where \(A_{1C}\)
    and \(A_{2C}\) are the adjacency matrices of the induced subgraph on \(\cP\). We check that \(A_1=Q^TA_1Q\), where $Q = Q(R)$.
    By construction, \(A_{2C}=R^TA_{1C}R\). In order to see that \(V_2=R^TV_1\) is satisfied, choose \(i\in\{1,\dots,n\}\) and let \(\chi\) be the characteristic vector of \(B_{1i}\). It suffices to prove that \(R^T\chi\) is the characteristic vector of \(B_{2i}\). 
    Let \(j\in\{1,\dots,n\}\). Since \(N_1^TN_1=N_2^TN_2\), we have
    \[\left(N_1^T\chi\right)_j=|B_{1i}\cap B_{1j}|=|B_{2i}\cap B_{2j}|,\]
    see also Remark~\ref{remark:main}(2).
    A double counting of
    \[\{(p,B)\in\cP\times\cB_2\colon\, p\neq p_j\in B\neq B_{2i}\text{ and }p\in B_{2i}\cap B\}\]
    implies
    \[\left(N_2N_1^T\chi\right)_j=\sum_{\substack{B\in\cB_2\\ p_j\in B}}|B_{2i}\cap B|=\begin{cases}(|B_{2i}|-1)\lambda+r&\text{ if }p_j\in B_{2i},\\\lambda|B_{2i}|&\text{ if }p_j\notin B_{2i},\end{cases}\]
    and thus
    \[\left(R^T\chi\right)_j=\begin{cases}\frac1{r-\lambda}\left((|B_{2i}|-1)\lambda+r-\lambda|B_{2i}|\right)=1&\text{ if }p_j\in B_{2i},\\\frac1{r-\lambda}\left(\lambda|B_{2i}|-\lambda|B_{2,i}|\right)=0&\text{ if }p_j\notin B_{2i}.\end{cases}\]\qedhere
\end{proof}

\section{Analysis for particular designs} \label{sec:ana}

In this section, we apply Theorem~\ref{thm:main} to several specific designs. We first show that most of the currently known switching methods follow from this theorem. Next, we look at projective designs as well as examples for small designs.

\subsection{Known switching methods}

In this section, we point out that many of the currently known switching methods can be obtained by Corollary~\ref{cor:samedesign}.

\subsubsection{Godsil-McKay switching}\label{sec:GM}

The following result is one of the most productive methods for finding cospectral graphs.

\begin{theorem}[GM-switching \cite{GM}]\label{thm:GM}
    Let \(\Gamma\) be a graph and let \(C_1,\dots,C_t\) be disjoint subsets of its vertex set such that the following hold for all \(i,j\in\{1,\dots,t\}\).
    \begin{enumerate}[(i)]
        \item Every vertex in \(C_i\) has the same number of neighbours in \(C_j\).
        \item Every vertex \(x\notin C_1\cup\dots\cup C_t\) has \(0\), \(\frac{1}{2}|C_i|\) or \(|C_i|\) neighbours in \(C_i\).
    \end{enumerate}
    For all \(i\in\{1,\dots,t\}\) and every vertex \(x\notin C_1\cup\dots\cup C_t\) that has exactly \(\frac{1}{2}|C_i|\) neighbours in \(C_i\), swap the adjacencies between \(x\) and \(C_i\). The resulting graph is \(\mathbb{R}\)-cospectral with \(\Gamma\).
\end{theorem}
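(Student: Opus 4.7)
The plan is to derive GM-switching from Corollary~\ref{cor:samedesign} by choosing, for each switching set $C_i$, an explicit design whose associated matrix $R$ equals the GM switching matrix $\frac{2}{n_i}J - I$, where $n_i = |C_i|$.

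Fix $i$ and write $n = n_i$. On the point set $C_i$, I take as blocks the empty set, $C_i$ itself, and every subset of $C_i$ of size $n/2$. A short count gives the parameters $(r,\lambda) = \bigl(1 + \binom{n-1}{n/2-1},\ 1 + \binom{n-2}{n/2-2}\bigr)$. Define $\pi$ to fix the blocks $\emptyset$ and $C_i$ and to send each $n/2$-subset $B$ to its complement $C_i \setminus B$. For two half-subsets $B, B'$ one has $|\pi(B)\cap \pi(B')| = n - |B| - |B'| + |B \cap B'| = |B\cap B'|$; the remaining intersections are trivial, so the combinatorial hypothesis of Corollary~\ref{cor:samedesign} is met.

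I would then expand $N(N^\pi)^T = \sum_B \chi_B\chi_{\pi(B)}^T$, splitting the sum across the trivial blocks and the $n/2$-subsets. Applying Pascal's rule and the identity $\binom{n-2}{n/2-1} - \binom{n-2}{n/2-2} = \tfrac{2}{n}\binom{n-2}{n/2-1}$, the matrix $R = \tfrac{1}{r-\lambda}(N(N^\pi)^T - \lambda J)$ collapses to $\tfrac{2}{n}J - I$, the well-known GM switching matrix. With this in hand the Corollary's conditions become routine: condition~(i) follows from GM hypothesis~(i) (with $j = i$), because the induced subgraph on $C_i$ is regular and a one-line expansion gives $R^T A_{C_i} R = A_{C_i}$; condition~(ii) follows from GM hypothesis~(ii), which forces $N(x)\cap C_i$ to be $\emptyset$, a half-subset of $C_i$, or $C_i$ itself for every $x$ outside $\bigcup_j C_j$---matching our blocks exactly. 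The swap prescribed by Corollary~\ref{cor:samedesign} then coincides with the GM swap.

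The main obstacle is the multi-set case $t > 1$: Corollary~\ref{cor:samedesign} applied to a single $C_i$ requires every $x \notin C_i$ to have neighbourhood in $C_i$ equal to a block, whereas GM~(i) only guarantees that this neighbourhood has a constant size $k_{ji}$ for $x \in C_j$, and $k_{ji}$ need not lie in $\{0, n/2, n\}$. I plan to resolve this by observing that the individual regular orthogonal matrices $Q(R_i)$ commute and that their composition $Q = \mathrm{diag}(R_1, \ldots, R_t, I)$ is again regular orthogonal, so the argument of the Corollary can be carried out for all $C_i$ simultaneously. A block-by-block check of $Q^T A Q$ then shows it is a valid adjacency matrix: the only block beyond the single-$C$ analysis is $R_i^T A_{ij} R_j$ for $i \ne j$, which reduces to $A_{ij}$ upon using GM~(i) (giving $A_{ij}J = a_{ij}J$ and $JA_{ij} = a_{ji}J$) together with the double-counting identity $a_{ij}n_i = a_{ji}n_j$. $\mathbb{R}$-cospectrality then follows from the characterisation recalled in Section~\ref{sec:prelim}.
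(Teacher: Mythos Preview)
Your argument is correct and, for $t=1$, coincides with the paper's: both take the design on $C_1$ whose blocks are $\emptyset$, $C_1$, and all $\tfrac{n}{2}$-subsets, and both recover $R=\tfrac{2}{n}J-I$ from Corollary~\ref{cor:samedesign}.

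For $t>1$ the two approaches diverge. The paper does \emph{not} treat the $C_i$ separately; instead it tries to stay within the design framework by putting a single $(r,\lambda)$-design on the union $C_1\cup\dots\cup C_t$, with block set all subsets meeting each $C_i$ in $0$, $\tfrac12|C_i|$, or $|C_i|$ points. This succeeds when all $|C_i|=4$, and Example~\ref{ex:GM64} shows how block multiplicities can rescue a further case like $|C_1|=6$, $|C_2|=4$; but the paper explicitly says that in general it is not known whether such a design exists. So the paper does not derive the full Theorem~\ref{thm:GM} from Corollary~\ref{cor:samedesign}; it cites the theorem and explains the extent to which it fits the framework.

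You, by contrast, abandon the design viewpoint for $t>1$ and give the classical direct proof: check that $Q=\mathrm{diag}(R_1,\dots,R_t,I)$ is regular orthogonal and verify $Q^TAQ$ block by block, using the regularity conditions and the double-count $a_{ij}n_i=a_{ji}n_j$ to show $R_i^TA_{ij}R_j=A_{ij}$. This is a complete and self-contained proof of GM-switching, but it sidesteps precisely the question the paper is interested in---namely, whether the multi-cell case can be realised as design switching. Your remark that the $Q(R_i)$ commute is true but not the crux: what matters is that you are redoing the conjugation argument globally rather than invoking Corollary~\ref{cor:samedesign} sequentially (which would indeed fail, as you note, because the intermediate neighbourhoods in $C_i$ need not be blocks).
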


Define \(c_i=|C_i|\) for all \(i\in\{1,\dots,t\}\). If we want to specify the size of the switching set, we speak of ``GM$_{c_1+\dots+c_t}$~switching''.
Algebraically, this method corresponds to a conjugation of the adjacency matrix with
the block diagonal matrix \(\mathrm{diag}(R_1,\dots,R_t,I)\), where $$R_i=\frac2{c_i}J_{c_i}-I_{c_i}$$ for all \(i\in\{1,\dots,t\}\).
Let \(\cP=C_1\cup\dots\cup C_t\) and let \(\cB\) be the set of all subsets of \(\cP\) that intersect \(C_i\) in \(0\), \(\frac{1}{2}c_i\) or \(c_i\) elements, for all \(i\in\{1,\dots,t\}\). Together, they form an incidence structure with \(r=\frac12|\cB|=\frac12\prod_{i=1}^t\left(\binom{c_i}{\frac12c_i}+2\right)\) blocks through each point.
If \(t=1\), then the number of blocks through any two points is equal to \(\lambda=\binom{c_1-2}{\frac12c_1-2}+1\).
If \(t\geq2\) and \(|C_i|=4\) for all \(i\in\{1,\dots,t\}\), then the number of blocks through any two points is equal to \(\lambda=2^{3t-2}\).
In both cases, GM-switching is a particular instance of Corollary~\ref{cor:samedesign} when we let \(\pi\) be the permutation that corresponds to the operation in the statement. But in general, we do not know whether for a given set of numbers \((c_1,\dots,c_t)\), there exists an \((r,\lambda)\)-design that gives rise to GM$_{c_1+\cdots+c_t}$ switching.

\begin{example}\label{ex:GM64}
    For GM$_{6+4}$ switching, the number of blocks through two vertices of \(C_1\) is \(40\), while there are 44 blocks through a pair of vertices of \(C_2\). Yet still, this problem can be fixed by adding multiplicities to certain blocks. For example, if we count all blocks containing exactly half of the vertices of \(C_1\) with multiplicity \(4\) and the other blocks with multiplicity \(11\), then the number of blocks through every pair of vertices is always \(\lambda=204\) and the number of blocks through a fixed point is always \(r=408\). Therefore, also if \(t=2\), \(|C_1|=6\) and \(|C_2|=4\), we can reformulate GM-switching as a particular case of Corollary~\ref{cor:samedesign}.
\end{example}

\subsubsection{Wang-Qiu-Hu switching}

Another powerful switching technique is the following.

\begin{theorem}[WQH-switching \cite{WQH}]\label{thm:WQH}
    Let \(\Gamma\) be a graph and let \(C_1^{1},C_1^{2},\dots,C_t^{1},C_t^{2}\) be disjoint subsets of its vertex set such that the following hold for all \(i,j\in\{1,\dots,t\}\).
    \begin{enumerate}[(i)]
        \item \(|C_i^{1}| = |C_i^{2}|\).
        \item Let \(\deg(x,C)\) denote the number of neighbours of \(x\) in \(C\). The difference $\deg(x,C^k_j)-\deg(x,C_j^{3-k})$ is constant for each \(k\in\{1,2\}\) and every vertex \(x\in C^k_i\).
        \item Every vertex \(x\notin C_1^{1}\cup C_1^{2}\cup\dots\cup C_t^1\cup C_t^{2}\) is adjacent to
        \begin{enumerate}[(a)]
            \item all vertices of \(C_i^{1}\) and no vertices of \(C_i^{2}\), or
            \item no vertices of \(C_i^{1}\) and all vertices of \(C_i^{2}\), or
            \item the same number of vertices of \(C_i^{1}\) as of \(C_i^{2}\).
        \end{enumerate}
    \end{enumerate}
    For all \(i\in\{1,\dots,t\}\) and every \(x\notin C_1^{1}\cup C_1^{2}\cup\dots\cup C_t^1\cup C_t^{2}\) for which (a) or (b) holds, swap the adjacencies between \(x\) and \(C_i^{1}\cup C_i^{2}\). The resulting graph is \(\mathbb{R}\)-cospectral with \(\Gamma\).
\end{theorem}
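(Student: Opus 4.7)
My plan is to realize Theorem~\ref{thm:WQH} as an instance of Corollary~\ref{cor:samedesign}. I treat the case $t=1$ in detail; the multi-pair case then follows by taking a product of the per-pair designs.

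For $t=1$, write $c := |C^1| = |C^2|$ and $\cP := C^1 \cup C^2$. On $\cP$, I define an incidence structure whose blocks are: the set $C^1$ with multiplicity $M := \frac{1}{c}\binom{2c-2}{c-1}$ (the $(c-1)$-th Catalan number), the set $C^2$ also with multiplicity $M$, and every ``balanced'' subset $A \cup B$ with $A \subseteq C^1$, $B \subseteq C^2$, $|A| = |B|$, each with multiplicity $1$. A short Vandermonde-style double counting then confirms this is an $(r,\lambda)$-design with $\lambda = \binom{2c-2}{c-1}$ and $r = 2\lambda$: the specific value of $M$ is exactly what equalizes the a priori different $\lambda$-counts for two points on the same side of $\cP$ versus one on each side. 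Let $\pi$ swap the $M$ copies of $C^1$ with the $M$ copies of $C^2$ and fix every balanced block; since $|C^1 \cap B| = |C^2 \cap B|$ for balanced $B$ and $|C^1 \cap C^2| = 0$, the intersection hypothesis of Corollary~\ref{cor:samedesign} is immediate.

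A direct block-matrix calculation then yields
\[
R \;=\; \frac{1}{r - \lambda}\bigl(N(N^\pi)^T - \lambda J_{2c}\bigr) \;=\; \begin{pmatrix} I_c - \tfrac{1}{c} J_c & \tfrac{1}{c} J_c \\ \tfrac{1}{c} J_c & I_c - \tfrac{1}{c} J_c \end{pmatrix},
\]
a regular orthogonal involution swapping the characteristic vectors of $C^1$ and $C^2$ and fixing every balanced characteristic vector: precisely the WQH swap. Condition (iii) of Theorem~\ref{thm:WQH} becomes condition (ii) of Corollary~\ref{cor:samedesign} (each external vertex's neighborhood is a block), and condition (ii) of Theorem~\ref{thm:WQH} is exactly what ensures that $R^T A_C R$ remains a $0/1$ matrix, verifying condition (i).

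For $t > 1$, the WQH matrix is block-diagonal $\mathrm{diag}(R_1,\dots,R_t)$, and the plan is to pass to the \emph{product} of the per-pair designs on $\bigcup_i (C_i^1 \cup C_i^2)$: a product block is a disjoint union $\bigcup_i B_i$ of per-pair blocks, taken with multiplicity equal to the product of the individual multiplicities. Because the per-pair parameters satisfy $r_i/b_i = 1/2$ and $\lambda_i/b_i = 1/4$ independently of $c_i$, the three a priori different values of $\lambda$ for the product---two points in the same $C_i^k$, two on opposite sides of the same pair, and two in different pairs---all collapse to the same constant, so the product is an $(r,\lambda)$-design; the coordinatewise permutation $\pi$ preserves intersections and recovers the full WQH switching. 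The main obstacle is the bookkeeping in this product construction, which ultimately rests on the universality of the per-pair ratios $r_i/b_i$ and $\lambda_i/b_i$.
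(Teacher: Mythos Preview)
For $t=1$ your construction is essentially identical to the paper's: the same non-simple design (all balanced subsets together with $C^1$ and $C^2$ taken at multiplicity $M=\binom{2c-2}{c-1}-\binom{2c-2}{c}=\tfrac{1}{c}\binom{2c-2}{c-1}$), the same permutation swapping the copies of $C^1$ and $C^2$, and the same target $\lambda=\binom{2c-2}{c-1}$. The paper phrases the multiplicity as the difference of binomials, you as a Catalan number; these agree.

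For $t>1$ you go further than the paper does. The paper only carries out the design-switching realisation for $t=1$ (and notes separately that the case $c_i\equiv 4$ reduces to GM-switching); it does not build an $(r,\lambda)$-design for the general multi-pair WQH switching. Your product-of-designs construction fills exactly this gap, and the crucial ingredient---that each per-pair design has $r_i/b_i=\tfrac12$ and $\lambda_i/b_i=\tfrac14$ independently of $c_i$---is correct: one checks $b_i=2M_i+\binom{2c_i}{c_i}=4\binom{2c_i-2}{c_i-1}=4\lambda_i$, whence $r_i=2\lambda_i=b_i/2$. This forces the three candidate $\lambda$-values in the product (same side, opposite sides, different pairs) all to equal $b/4$, and a block computation of $N(N^\pi)^T$ shows the associated $R$ is the desired $\mathrm{diag}(R_1,\dots,R_t)$. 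So your argument reproduces the paper's $t=1$ discussion and supplies the $t>1$ case the paper leaves open.
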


Define \(c_i=|C^1_i|=|C^2_i|\) for all \(i\in\{1,\dots,t\}\). If we want to specify the size of the switching set, we speak of ``WQH$_{2c_1+\dots+2c_t}$~switching''.
Algebraically, this method corresponds to a conjugation of the adjacency matrix with the matrix \(\mathrm{diag}(R_1,\dots,R_t,I)\), where \[R_i =\frac1{c_i}\left(\arraycolsep=2.5pt\def\arraystretch{.9}\begin{array}{rr}-J_{c_i}&J_{c_i}\\J_{c_i}&-J_{c_i}\end{array}\right)+I_{2c_i}\] for all \(i\in\{1,\dots,t\}\).

Let \(\cP=C_1^{1}\cup C_1^{2}\cup\dots\cup C_t^{2}\) and let \(\cB\) be the set of all subsets that intersect \(C_i^{1}\cup C_i^{2}\) either in \(C_i^{1}\), in \(C_i^{2}\) or in equally many elements of \(C_i^{1}\) as of \(C_i^{2}\), for all \(i\in\{1,\dots,t\}\). These sets form an incidence structure with \[r=\frac12|\cB|=\frac12\prod_{i=1}^t\left(2+\sum_{j=0}^{c_i}\binom{c_i}{j}^2\right)=\frac12\prod_{i=1}^t\left(2+\binom{2c_i}{c_i}\right)\] blocks through each point, where the last equality is obtained by counting the number of north-east lattice paths between \((0,0)\) and \((c_i,c_i)\).
If \(c_i=4\) for all \(i\in\{1,\dots,t\}\), then WQH-switching is equivalent to GM-switching on sets of four vertices, and hence a particular case of Corollary~\ref{cor:samedesign}, see Section~\ref{sec:GM}.
If \(t=1\), then the number of blocks through two points is in general not constant. However, we can modify the design by adding multiplicities to the blocks as follows. Let \(c=c_1\). The number of blocks through two points that are both in \(C_1^1\) or both in \(C_1^2\) is equal to $$1+\sum_{i=2}^c\binom{c-2}{i-2}\binom{c}{i}=\binom{2c-2}{c}+1,$$ where the equality can be obtained by counting north-east lattice paths between \((0,0)\) and \((c,c-2)\). The number of blocks through two points, one from \(C_1^{1}\) and one from \(C_1^{2}\), is equal to $$\sum_{i=1}^c\binom{c-1}{i-1}^2=\binom{2c-2}{c-1},$$ the number of north-east lattice paths between \((0,0)\) and \((c-1,c-1)\).
If we let the multiplicity of the blocks \(C_i^{1}\) and \(C_i^{2}\) be equal to \(\binom{2c-2}{c-1}-\binom{2c-2}{c}\geq1\), then the number of blocks through every pair of points becomes \(\binom{2c-2}{c-1}\). Note that the number of blocks through each point is also increased, but remains a constant. We have found an \((r,\lambda)\)-design that, together with the permutation of the blocks that corresponds to the operation in the statement, leads to WQH-switching if \(t=1\).
Note that, in contrast with the case \(t=1\) for GM-switching, the design is not totally obvious here. We need repeated blocks.

\subsubsection{Abiad-Haemers switching}\label{sec:AH}

Abiad and Haemers \cite{AH} gave an algebraic description of switching methods that correspond with a conjugation of the adjacency matrix with a regular orthogonal matrix \(Q\) of level 2. There are essentially three different switching methods and they can be described easily by the orthogonal matrix \(R\) such that \(Q = Q(R)\). We refer to \cite{ABS} for a combinatorial description of these methods.

\paragraph{Infinite family}

The \(2m\times2m\) matrix
\[R_{2m}=\frac{1}{2}\mathrm{circulant}(J,O,\dots,O,Y)
\]
where $J$, $O$ and $Y=2I-J$ are square matrices of order $2$,
yields an infinite family of switching methods which we call AH$_{2m}$ switching.
The 01-vectors \(v\) such that \(R_{2m}^Tv\) is another 01-vector, can be obtained from the following lemma.

\begin{lemma}[{\cite[Lemma~15]{phdaida}}]
\label{lemma:phdaida}
    A 01-vector \(v\) has the property that \(R_{2m}^Tv\) is again a 01-vector if and only if the numbers of ones in each of the tuples \((v_1,v_2),\dots,(v_{2m-1},v_{2m})\), is the same modulo 2.
    If the number of ones is always even (0 or 2), then \(R_{2m}^Tv=v\). If it is always equal to 1, then \(R_{2m}^Tv=(v_3,\dots,v_{2m},v_1,v_2)^T\).
\end{lemma}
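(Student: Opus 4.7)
The plan is to exploit the $2\times 2$ block structure of $R_{2m}$. I would decompose $v \in \{0,1\}^{2m}$ into consecutive pairs $v^{(i)} = (v_{2i-1}, v_{2i})^T$ for $i = 1,\dots,m$, and since $J$, $O$, $Y$ are symmetric observe that $R_{2m}^T$ is again a block circulant, with first block row $(J, Y, O, \dots, O)/2$. This gives
\[(R_{2m}^T v)^{(i)} = \tfrac{1}{2}\bigl(J\, v^{(i)} + Y\, v^{(i+1)}\bigr),\]
with indices taken modulo $m$, reducing the lemma to a purely local condition coupling each pair $v^{(i)}$ with its successor.

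Next I would introduce $s_i = v_{2i-1} + v_{2i}$ and $d_i = v_{2i-1} - v_{2i}$, so that the two entries of the $i$th output block read $(s_i \pm d_{i+1})/2$. Requiring both to lie in $\{0,1\}$ forces $s_i$ and $d_{i+1}$ to have the same parity; since $s_i \equiv d_i \pmod{2}$, this chains around the cycle and forces the parity of the number of ones to be constant over all pairs, which is exactly the stated criterion.

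The two explicit cases then fall out quickly. If every $s_i$ is even, then each $d_i = 0$ (so $v_{2i-1} = v_{2i}$), and substituting back gives $(R_{2m}^T v)^{(i)} = v^{(i)}$, i.e.\ $R_{2m}^T v = v$. If every $s_i = 1$, the output pair simplifies to $\bigl((1+d_{i+1})/2,\,(1-d_{i+1})/2\bigr) = v^{(i+1)}$, producing the block-cyclic shift $(v_3, \dots, v_{2m}, v_1, v_2)^T$.

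The only genuine obstacle is bookkeeping: making sure the transposition and the direction of the cyclic shift in $R_{2m}^T$ are pinned down correctly, since this determines whether one obtains a forward or backward block-shift in the odd-parity case. After that, the argument is just a parity propagation around the cycle together with two one-line verifications.
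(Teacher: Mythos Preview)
Your argument is correct. The block decomposition is exactly right: with $R_{2m}^T$ having first block row $(J,Y,O,\dots,O)/2$, the local formula $(R_{2m}^Tv)^{(i)}=\tfrac12(Jv^{(i)}+Yv^{(i+1)})$ does yield entries $(s_i\pm d_{i+1})/2$, and because $s_i\in\{0,1,2\}$ and $d_{i+1}\in\{-1,0,1\}$, the parity condition $s_i\equiv d_{i+1}\pmod 2$ is both necessary and sufficient for the entries to land in $\{0,1\}$. The chaining via $s_i\equiv d_i\pmod 2$ is clean, and the two endgame verifications are accurate, including the direction of the shift.

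As for comparison with the paper: there is nothing to compare against. The paper does not prove this lemma; it merely quotes it from \cite[Lemma~15]{phdaida} and then uses it to rephrase AH$_{2m}$ switching in the design framework. Your write-up therefore supplies a self-contained proof where the paper gives none. The only cosmetic point is that you phrase the parity argument as a one-way implication (``forces''), leaving the converse to be read off from the two explicit cases; it would be slightly tidier to state up front that the parity match is equivalent to both entries lying in $\{0,1\}$, but this is a matter of exposition, not a gap.
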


We can rephrase the switching method that corresponds to \(R_{2m}\) as a particular case of Corollary~\ref{cor:samedesign} for the \((r=2^m,\lambda=2^{m-1})\)-design with point set \(\cP=\{1,\dots,2m\}\) and blocks \(B\subseteq\cP\) such that \(|B\cap\{2i-1,2i\}|\pmod{2}\) is the same for all \(i\in\{1,\dots,m\}\). The permutation \(\pi\) in Corollary~\ref{cor:samedesign} is a cyclic shift in those cases where the number is 1 modulo 2, and the identity otherwise.

\paragraph{Fano switching}

The \(7\times7\) matrix
\[R_\text{Fano}=\frac12\mathrm{circulant}(-1,1,1,0,1,0,0)
\]
yields a switching method that can be obtained by permuting the lines of the Fano plane, see also Example~\ref{ex:fano} below.

\paragraph{Cube switching}

The \(8\times8\) matrix
\[R_\text{cube}=\frac{1}{2}\left(\arraycolsep=2.5pt\def\arraystretch{.9}\begin{array}{rrrr}
-I & I & I & I \\
I & -Z & I & Z \\
I & Z & -Z & I \\
I & I & Z & -Z
\end{array}\right)\]
where $I$, $J$ and $Z=J-I$, are square matrices of order $2$, corresponds to a switching method that can be obtained from the design that is the affine space \(\mathrm{AG}(3,2)\). See also Example~\ref{ex:AG32} below.

\subsection{Projective planes and Singer cycles}

Taking the points of a finite affine or projective geometry together with subspaces of a fixed dimension for blocks
is a rich source for $(r, \lambda)$-designs. In general, when applying Theorem \ref{thm:main},
it is unclear when $R^T A_C R$ is again an adjacency matrix. The only choices for $A_C$ that always work, are $A_C=O$ or $A_C=J-I$.
In some projective planes, there is another choice: a Singer cycle.
A \emph{Singer cycle} of a projective plane is a collineation that permutes the points cyclically. Such a cycle exists if and only if the plane has a cyclic point-line incidence matrix. With some abuse of nomenclature, we could also use the name \emph{Singer cycle} for a subgraph \(C\) when it is a cycle graph with the same ordering of a Singer cycle in the projective designs.

\begin{theorem}\label{thm:singer}
    Consider two projective planes \(\Pi_1\) and \(\Pi_2\) of order \(q\) that admit a Singer cycle. 
    Let \(\Gamma\) be a graph with a cyclic subgraph \(C\) whose vertices are identified both with points of \(\Pi_1\) and \(\Pi_2\) in the same cyclic order.
    Suppose that every vertex \(x\notin C\) is adjacent to the vertices of \(C\) contained in a line, its complement or all or no vertices of \(C\).
    Make every vertex \(x\notin C\) that is (non)adjacent to the points of the \(i\)th line of \(\Pi_1\), (non)adjacent to the points of the \(i\)th line of \(\Pi_2\). The resulting graph is \(\mathbb{R}\)-cospectral with \(\Gamma\).
\end{theorem}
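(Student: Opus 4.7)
My plan is to derive the theorem directly from Theorem~\ref{thm:main}, using the relaxation of condition~(ii) described in Remark~\ref{remark:main}(1)(ii'). I would first observe that each $\Pi_i$ is a symmetric $2$-$(q^2+q+1,q+1,1)$ design, hence an $(r,\lambda)$-design with $r=q+1$, $\lambda=1$ and $v=b=q^2+q+1$. Because the design is symmetric, its incidence matrix $N_i$ satisfies $N_i^T N_i = N_i N_i^T = q I_v + J_v$, so the condition $N_1^T N_1 = N_2^T N_2$ required by Theorem~\ref{thm:main} holds for any labelling of points and blocks.

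Next I would exploit the Singer cycles. For each $i\in\{1,2\}$, I would order the points and lines of $\Pi_i$ by iterating its Singer cycle, so that $N_i$ becomes a circulant matrix of order $v$. By the hypothesis that the vertices of $C$ are identified with the point sets of $\Pi_1$ and $\Pi_2$ in the same cyclic order, these two point-labellings can be chosen to coincide. Then $R = \tfrac{1}{q}(N_1 N_2^T - J_v)$ is built from circulant matrices of order $v$ (using that the transpose and the product of circulants are again circulant), and is therefore itself circulant. The adjacency matrix $A_C$ of the cycle on $v$ vertices, in this same cyclic ordering, is circulant as well.

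Since any two circulant matrices of the same order commute, we have $A_C R = R A_C$. Combining this with the fact that $R$ is orthogonal, which is established in the proof of Theorem~\ref{thm:main}, gives
\[
R^T A_C R \;=\; R^T R\, A_C \;=\; A_C.
\]
In particular $R^T A_C R$ is again an adjacency matrix (indeed, the same cycle), so condition~(i) of Theorem~\ref{thm:main} is satisfied for free, and the hypothesis on vertices $x\notin C$ is exactly the relaxed condition~(ii') from Remark~\ref{remark:main}(1). Applying Theorem~\ref{thm:main} then produces the desired $\mathbb{R}$-cospectral graph.

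The main obstacle to a direct application of Theorem~\ref{thm:main} is usually condition~(i), because for most choices of $A_C$ one has little control over whether $R^T A_C R$ remains a $\{0,1\}$-matrix with zero diagonal. The role of the Singer cycle assumption is precisely to bypass this difficulty: it forces $R$ to be a circulant matrix, which commutes with every other circulant matrix of the same order, and in particular with the adjacency matrix of the cycle graph $C$. With that commutation in hand, no case analysis on $R$ is required.
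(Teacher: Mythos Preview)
Your argument is correct and in fact cleaner than the paper's own proof. Both proofs aim to verify condition~(i) of Theorem~\ref{thm:main} by showing that $R^T A_C R = A_C$, and both then invoke Theorem~\ref{thm:main} with the relaxed condition~(ii'). The difference lies in how this identity is established. The paper fixes a cyclic incidence matrix $N$ and computes $N^T A N$ explicitly by a double-counting argument on pairs of consecutive points, obtaining $N^T A N = qA + 2J_v$ (and, by duality, $N A N^T = qA + 2J_v$); it then unwinds the definition of $R$ together with $JA = AJ = 2J$ and Lemma~\ref{lemma:design} to conclude $R^T A R = A$. You bypass this computation entirely by observing that $N_1$, $N_2$, $J_v$, and $A_C$ are all circulant of the same order, hence $R$ is circulant and commutes with $A_C$, whence $R^T A_C R = R^T R A_C = A_C$ by orthogonality of $R$.

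Your route is shorter and more conceptual: once one knows that circulant matrices form a commutative algebra, no case analysis or index-chasing is needed. The paper's explicit computation, on the other hand, yields the concrete formula $N^T A N = qA + 2J_v$, which is slightly more information than you need but may be of independent interest. Either approach is perfectly valid here.
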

\begin{proof}
    Let \(A=\mathrm{circulant}(0,1,0,\dots,0,1)\) be the adjacency matrix of the cycle graph of order \(v=q^2+q+1\). If \(N\) is a cyclic incidence matrix of a projective plane of order \(q\) with point set \(\cP=\{p_1,\dots,p_v\}\) and line set \(\cL=\{L_1,\dots,L_v\}\), ordered cyclically with indices modulo \(v\), then
    \begin{align*}
        \left(N^TAN\right)_{ij}&=\left|\left\{k\in\ZZ/v\ZZ\colon\,p_k\in L_i\text{ and }p_{k-1}\in L_j\right\}\right|\\&\quad+\left|\left\{k\in\ZZ/v\ZZ\colon\,p_k\in L_i\text{ and }p_{k+1}\in L_j\right\}\right|\\
        &=\left|L_i\cap L_{j+1}\right|+\left|L_i\cap L_{j-1}\right|\\
        &=\begin{cases}q+2&\text{ if }i=j\pm1,\\2&\text{ else,}\end{cases}\\
        &=\left(qA+2J_v\right)_{ij}
    \end{align*}
    so \(N^TAN=qA+2J_v\) and, by duality, \(NAN^T=qA+2J_v\). If we define \(R=\frac1q\left(N_1N_2^T-J_v\right)\) as in Theorem~\ref{thm:main}, where \(N_1\) and \(N_2\) are the incidence matrices of \(\Pi_1\) and \(\Pi_2\) respectively, then from the previous equalities, together with \(JA=AJ=2J\) and Lemma~\ref{lemma:design}, it follows that \(R^TAR=A\). Thus the statement is a corollary of Theorem~\ref{thm:main}.
\end{proof}

\begin{example}
    Given a projective plane \(\Pi=(\cP,\cL)\) of order \(q\) with a Singer cycle \(\pi=(p_1,\dots,p_{q^2+q+1})\), choose a line \(L\in\cL\) and let \(\Omega=\{p_i\in\cP\colon\, p_{-i}\in L\}\). The set \(\Omega\) is an \emph{oval}: a set of \(q+1\) points no three of which are collinear \cite[Theorem~16]{thesishain}. Define \(L_i=\pi^i(L)\) and \(\Omega_i=\pi^i(\Omega)\). The sets \(\Omega_i\), \(i\in\{1,\dots,q^2+q+1\}\), form a projective plane that is isomorphic to \(\Pi\). It has the same point set \(\cP\) and agrees with the cyclic structure. That is, \(\pi\) also induces a collineation of the lines \(\Omega_i\), \(i\in\{1,\dots,q^2+q+1\}\). The switching operation of Theorem~\ref{thm:singer} comes down to replacing (non)adjacencies with \(L_i\) by (non)adjacencies with \(\Omega_i\).
\end{example}

\begin{remark}\label{rem:planesreducible}
    Switching methods coming from Corollary~\ref{cor:samedesign} for projective planes may be reducible to WQH-switching (Theorem~\ref{thm:WQH}) if the adjacency matrix of the switching set allows it. Indeed, every permutation of lines can be written as a product of transpositions of two lines. Applying Corollary~\ref{cor:samedesign} where \(\pi\) is a transposition, is the same operation as applying WQH-switching on the symmetric difference of those lines, since every other line intersects each of the lines (without the intersection) in equally many points. In particular, if the switching set induces an edgeless or complete subgraph, the switching method from Corollary~\ref{cor:samedesign} when applied to a projective plane of order \(q\), is always reducible to WQH$_{2q}$ switching.

    The same can be said about permutations that preserve parallel lines of affine planes: if the adjacency matrix allows it, then the switching method from Corollary~\ref{cor:samedesign} when applied to an affine plane of order \(q\) and a permutation that preserves parallelism, may be reducible to WQH$_{2q}$ switching.
\end{remark}

\subsection{Classification of switching methods from small designs}

In this section, we analyse some examples of switching methods that can be obtained from Corollary~\ref{cor:samedesign} when applied to certain small $(r,\lambda)$-designs. We make the following restrictions.
\begin{itemize}
    \item We only classify methods coming from one design (Corollary~\ref{cor:samedesign}) instead of two designs (Theorem~\ref{thm:main}), to simplify computations.
    \item We only consider \emph{simple} designs. The reason for avoiding the non-simple case (where blocks can be repeated) is explained in Remark~\ref{remark:nonsimple} below.
\end{itemize}

Our general strategy is as follows. First assume that we are given a simple $(r,\lambda)$-design \(\cD\) on $v$ points and $b$ blocks.
Let \(G\leq\Sym(b)\) be the group of permutations of blocks of \(\cD\) that preserve their intersection sizes. Let \(H=\mathrm{Aut}(\cD)\leq\Sym(v)\) be the automorphism group of the design \(\cD\), acting on its blocks. Observe that \(H\leq G\). Denote \(\Gamma_\pi\) for the graph obtained by Corollary~\ref{cor:samedesign}.

\begin{lemma}
    If \(\pi\) and \(\tau\) are in the same double coset of $H\backslash G/H$, then \(\Gamma_\pi\cong\Gamma_\tau\).
\end{lemma}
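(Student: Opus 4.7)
Write $\pi = h_1\tau h_2$ with $h_1,h_2\in H$. My plan is to produce an explicit isomorphism $\phi\colon\Gamma_\pi\to\Gamma_\tau$, after first establishing the factorisation
\[R_\pi \;=\; R_{h_2}R_\tau R_{h_1} \;=\; P_{h_2^{-1}}R_\tau P_{h_1^{-1}},\]
where $P_h$ is the permutation matrix of $h\in H$ acting on $\cP$. Two observations underlie this. First, from the proof of Theorem~\ref{thm:main} we have $R_\pi^T\chi_{B_i}=\chi_{B_{\pi(i)}}$; since $NN^T=\lambda J_v+(r-\lambda)I_v$ is invertible by Lemma~\ref{lemma:design}(i), the block-characteristic vectors span $\mathbb{R}^v$, so $R_\pi^T$ is determined by this rule and the assignment $\pi\mapsto R_\pi^T$ is a group homomorphism; in particular $R_{\pi_1\pi_2}=R_{\pi_2}R_{\pi_1}$. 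Second, for $h\in H$ both $R_h^T$ and $P_h$ send $\chi_{B_i}$ to $\chi_{B_{h(i)}}=\chi_{h(B_i)}$, and so agree on a spanning set; hence $R_h=P_{h^{-1}}$.

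I would then define $\phi$ to act as the point permutation $h_1^{-1}$ on $C$ and as a permutation $\rho$ of external vertices satisfying $i(\rho(y))=h_2(i(y))$ for every external $y$ (where $i(y)$ denotes the index of the block equal to $y$'s neighbourhood in $\Gamma$). External-to-$C$ adjacencies are then preserved: $y$'s neighbourhood $B_{\pi(i(y))}=B_{h_1\tau h_2(i(y))}$ in $\Gamma_\pi$ is sent by $\phi$ to
\[h_1^{-1}\bigl(B_{h_1\tau h_2(i(y))}\bigr) \;=\; B_{\tau h_2(i(y))} \;=\; B_{\tau(i(\rho(y)))},\]
which is the neighbourhood of $\rho(y)$ in $\Gamma_\tau$. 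For internal $C$-edges, the adjacency matrix $R_\pi^T A_C R_\pi$ of $\Gamma_\pi$ is conjugated by $P_{h_1^{-1}}$; substituting the factorisation of $R_\pi$ and using that $P_{h_1}$ is orthogonal, the outer factors cancel and one is left with $R_\tau^T(P_{h_2}A_CP_{h_2}^T)R_\tau$, which coincides with $R_\tau^T A_C R_\tau$ whenever $A_C$ is $h_2$-invariant. We may always arrange this by choosing $C$ so that the induced subgraph is edgeless or complete (Remark~\ref{remark:main}(3)).

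The main obstacle is producing the external permutation $\rho$ as an automorphism of the subgraph $W$ induced on $V(\Gamma)\setminus C$, subject to $i\circ\rho=h_2\circ i$. Because $h_2\in H=\Aut(\cD)$ is a design automorphism, it permutes block indices and hence the fibres $\{i^{-1}(j):j\in\{1,\dots,b\}\}$ in a design-compatible manner; in the classification setting considered here each block is realised as the neighbourhood of at most one external vertex (or at least the fibre sizes are $H$-invariant), so $\rho$ may be defined fibrewise by the block permutation induced by $h_2$ and is automatically compatible with $W$. Combining this with the two preceding paragraphs completes the isomorphism $\phi\colon\Gamma_\pi\to\Gamma_\tau$.
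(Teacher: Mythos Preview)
Your approach is considerably more explicit than the paper's three-line argument: you establish the factorisation $R_\pi = P_{h_2^{-1}}R_\tau P_{h_1^{-1}}$ via the anti-homomorphism $\pi\mapsto R_\pi$ and the identification $R_h = P_{h^{-1}}$ for $h\in H$, and then attempt to write down an explicit vertex bijection $\phi$. The paper instead works directly at the level of graph operations: switching by an element $h\in H$ is exactly the same as relabelling the vertices of $C$ by $h$ (since $R_h$ is a permutation matrix), so in the decomposition $\tau = h_1\pi h_2$ the outer factors are each just relabellings of the switching set---one absorbed into the graph before the $\pi$-switch, the other applied afterwards. No permutation of external vertices is ever invoked.

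This is precisely where your argument breaks down. You require the external permutation $\rho$ to satisfy $i\circ\rho = h_2\circ i$ \emph{and} to be an automorphism of the induced subgraph $W$ on $V(\Gamma)\setminus C$, and then assert that $\rho$ is ``automatically compatible with $W$''. But $W$ is completely arbitrary: the hypotheses of Corollary~\ref{cor:samedesign} constrain only the neighbourhoods of external vertices \emph{into} $C$, not the edges among external vertices themselves. A permutation of external vertices dictated by a block-index rule has no reason whatsoever to preserve $W$-adjacency. Your parenthetical about ``the classification setting considered here'' and $H$-invariant fibre sizes imports hypotheses that are not part of the lemma. The same objection applies to your appeal to Remark~\ref{remark:main}(3) to force $A_C$ to be edgeless or complete: the lemma is stated for every $\Gamma$ admitted by Corollary~\ref{cor:samedesign}, and you are not free to restrict $A_C$ after the fact. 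The paper's route---acting only on $C$ and never touching the external vertices---sidesteps both of these issues.
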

\begin{proof}
    If \(\pi\) and \(\tau\) are in the same double coset of $H\backslash G/H$, then \(\tau=h_1\pi h_2\) for certain automorphisms \(h_1,h_2\) of \(\cD\). An automorphism of \(\cD\) induces a permutation of the vertices of the switching set and thus an automorphism of the graph. In other words, \(h_1\) is an automorphism of the graph before switching and \(h_2\) is an automorphism of the graph after switching, so \(\Gamma_\pi\cong\Gamma_\tau\).
\end{proof}

So in order to classify the different switching methods that can be obtained by the design \(\cD\), we choose a representative \(\pi\) for each double coset in $H\backslash G/H$. Given the permutation \(\pi\) of the blocks of \(\cD\), we calculate the matrix \(R\) as defined in Corollary \ref{cor:samedesign}. Note that every double coset gives a different matrix \(R\). For each such matrix \(R\), we can then calculate the possible matrices \(A_C\).

For most examples, we use GAP \cite{GAP} to calculate the double coset representatives and the possible matrices \(R\) and \(A_C\). Since the group $G$ is often difficult to calculate, we calculate instead a group that is somewhere between $G$ and $\Sym(b)$, namely the group of permutations of blocks of $\cD$ that preserve a certain fixed intersection size, as this group can be calculated more easily as the automorphism group of the following graph. Its vertices are the blocks of the design. Two blocks are adjacent if they intersect in a given, fixed size. For this automorphism group, we calculate the double cosets and a representative for each of them. Only after that step, we filter down the representatives to those that preserve the intersection size of blocks. To go over some small designs, we use the \texttt{design} package \cite{GAPdesign}.

\begin{example}
\label{ex:fano}
    We illustrate the above approach for the Fano plane PG$(2,2)$, the projective plane of order 2, which is an \((r=3,\lambda=1)\) design on seven points and seven blocks. We can choose its incidence matrix to be
$$N=\begin{pmatrix}
1&1&1&0&0&0&0\\
1&0&0&1&1&0&0\\
1&0&0&0&0&1&1\\
0&1&0&1&0&1&0\\
0&1&0&0&1&0&1\\
0&0&1&1&0&0&1\\
0&0&1&0&1&1&0\\
\end{pmatrix}.$$
Since any two lines intersect in the same number of points (that is, one point), \(G=\Sym(7)\) is the full symmetric group of order seven, acting on the lines. The automorphism group of the design is the collineation group $H=\mathrm{P\Gamma L}(3,2)$ acting on the lines. For the ordering $1,\dots,7$ of the lines (columns) according to the above incidence matrix, we find $H = \<(1~3)(5~7), (1~4~2)(3~5~6)\>$. Now, $H\backslash G/H$ consists of four double cosets with the following representatives:
\begin{align*}
    &\pi_1 = \mathrm{id}, && \pi_3 = (5~6~7), \\
    &\pi_2 = (6~7), && \pi_4 = (3~4)(5~6~7).
\end{align*}
If we denote $R_i=\frac12\left(N(N^{\pi_i})^T-J_7\right)$, following the notation of Corollary~\ref{cor:samedesign}, then
\begin{align*}
 & R_1 = I_7,
 && R_3 = \frac12
 \left(\arraycolsep=2.5pt\def\arraystretch{.9}\begin{array}{rrrrrrr}
  2&  \0&  \0&  \0&  \0&  \0&  \0 \\
  \0&   1&   1&   1&   -1&  0&   0 \\
  \0&   1&   1&  -1&   1&   0&   0 \\
  \0&   0&   0&   1&   1&   1&  -1 \\
  \0&   0&   0&   1&   1&  -1&   1 \\
  \0&   1&  -1&   0&   0&   1&   1 \\
  \0&  -1&   1&   0&   0&   1&   1
 \end{array}\right),\\
 & R_2 = \frac12
 \left(\arraycolsep=2.5pt\def\arraystretch{.9}\begin{array}{rrrrr}
  2I_3 & \0 & \0 & \0 & \0 \\
  \0& 1 & 1 & 1 & -1 \\
  \0& 1 & 1 & -1 & 1 \\
  \0& 1 & -1 & 1 & 1 \\
  \0& -1 & 1 & 1 & 1
 \end{array}\right),
 && R_4 = \frac12
 \left(\arraycolsep=2.5pt\def\arraystretch{.9}\begin{array}{rrrrrrr}
  1&   1&   0&   1&   0&   0&  -1 \\
  1&   0&   1&   0&  -1&   0&   1 \\
  0&   1&   1&  -1&   1&   0&   0 \\
  1&  -1&   0&   0&   1&   1&   0 \\
  0&   0&   0&   1&   1&  -1&   1 \\
  0&   1&  -1&   0&   0&   1&   1 \\
 -1&   0&   1&   1&   0&   1&   0
 \end{array}\right).
\end{align*}
For each of the $R_i$, we determine the adjacency matrices $A_C$ such that $R^TA_CR$ is an adjacency matrix again.
\begin{enumerate}
    \item For $R_1$, any adjacency matrix suffices. But \(\pi_1\) does not do anything: \(\Gamma_{\pi_1}=\Gamma\). So this case is not interesting.
    \item $R_2$ corresponds to GM$_4$ switching (or, equivalently, WQH$_4$ switching) on the last four vertices. This agrees with Remark~\ref{rem:planesreducible}.
    \item $R_3$ corresponds to AH$_6$ switching on the last six vertices. 
    \item $R_4$ corresponds to what is called Fano switching in \cite[Section~6.2]{ABS}. While it can be written as products of permutations of $R_2$ and $R_3$, there are exactly two cases in which Fano switching is not reducible to GM$_4$ and AH$_6$ switching. These are exactly the irreducible cases mentioned in \cite[Theorem~25]{ABS}.
\end{enumerate}
\end{example}

Strictly speaking, for the above design to yield exactly the same method as Fano switching, we also need the empty block, the block containing every point, and the blocks that are the complements of the lines. Remark~\ref{remark:main}(1) assures us that these blocks can always be added. Therefore, we do not include them in the incidence matrix.

\begin{remark}\label{remark:nonsimple}
    The above strategy also works for \emph{non-simple} designs (designs with repeated blocks), but we have to be more careful. The \texttt{AutomorphismGroup} of a design in GAP \cite{GAP,GAPdesign} is only defined for simple designs. Therefore, we could consider repeated blocks as the same block with a certain multiplicity that must be preserved under every automorphism. When calculating the matrix \(R\) from Theorem~\ref{thm:main}, we could modify the incidence matrix such that on position \((p,B)\), it has a $\sqrt{m}$ if the point \(p\) lies in the block \(B\), where \(m\) is the multiplicity of \(B\), and \(0\) otherwise. In this way, the expression for $R$ yields the same result as if we would have \(m\) columns that correspond to the same block \(B\).

    Despite the possibility, we restrict our classification to simple designs since there seems to be no useful upper bound on the number of blocks or the replication number \(r\) that a non-simple \((r,\lambda)\)-design on a fixed number of points can have. For example, the design that corresponds to GM$_{6+4}$~switching in Example~\ref{ex:GM64}, has \(816\) blocks in total and \(408\) blocks through each point. Therefore, a computer enumeration of non-simple designs seems infeasible.
\end{remark}

\subsubsection{Classification of switching methods from simple designs on at most 6 points}\label{sec:classification_simple}

In this section, we list all non-trivial switching methods that can be obtained from a simple design on \(v\) points, \(v\leq6\) (larger values of \(v\) become computationally difficult). That is, we allow multiple block sizes, but no repeated blocks.
For this classification, we make use of the \texttt{design} package in GAP \cite{GAPdesign}. To make the computations easier, we assume that there is no empty block or a block containing all points, since they do not change the \(R\) matrix from Theorem~\ref{thm:main}, see also Remark~\ref{remark:main}(i). We also assume that there is no block of size \(1\), because if there exists a block \(B=\{p\}\) of size one, then, up to a permutation of the rows and columns, the matrix \(R\) of Theorem~\ref{thm:main} is equal to \[R = \begin{pmatrix}1&0\\0&R'\end{pmatrix},\] where \(R'\) comes from the design where \(p\) is removed (note that deleting a point does not change \(r\) or \(\lambda\)). In the enumeration, this design would have already occurred. Similarly, we assume that there is no block of size \(v-1\).

Using the GAP code that is available on \url{https://github.com/robinsimoens/design-switching}, we conclude that there are exactly four switching methods that arise from a simple design on at most 6 points, see Table~\ref{tab:classification_simple}. They are listed as examples below.

\begin{table}[ht]
    \centering
    \begin{tabular}{c|c|l|l}
        $v$ & \# methods & Method name & Reference \\
        \hline\hline
        4 & 1 & GM$_4$ switching & Example~\ref{ex:GM4}\\
        \hline
        5 & 0 & \\
        \hline
        6 & 3 & GM$_6$ switching & Example~\ref{ex:GM6}\\
        & &WQH$_6$ switching & Example~\ref{ex:WQH6}\\
        & &AH$_6$ switching & Example~\ref{ex:AH6}
    \end{tabular}
    \caption{All switching methods from simple designs on \(v\) points, \(v\leq6\).}
    \label{tab:classification_simple}
\end{table}

\begin{example}[GM\(_4\)-switching]\label{ex:GM4} The affine plane AG$(2,2)$ is an \((r=3,\lambda=1)\)-design on four points with incidence matrix
$$\begin{pmatrix}
1&1&1&0&0&0\\
1&0&0&1&1&0\\
0&1&0&1&0&1\\
0&0&1&0&1&1\\
\end{pmatrix}.$$
It has six lines that can be permuted in 48 different ways that preserve parallelism.
The collineation group has 24 elements (all permutations of the four points), so there are two cosets. One of them is $H$ itself, corresponding to a trivial operation. A possible representative for the other one is a permutation that swaps every two lines of a parallel class. With the current ordering of the columns, it is the permutation $(1~6)(2~5)(3~4)$. This is exactly the GM$_4$ switching operation.
\end{example}

\begin{example}[GM\(_6\)-switching]\label{ex:GM6} The set of all 3-subsets of a 6-set form an \((r=10,\lambda=4)\)-design on six points with incidence matrix
$$\begin{pmatrix}
1&1&1&1&1&1&1&1&1&1&0&0&0&0&0&0&0&0&0&0\\
1&1&1&1&0&0&0&0&0&0&1&1&1&1&1&1&0&0&0&0\\
1&0&0&0&1&1&1&0&0&0&1&1&1&0&0&0&1&1&1&0\\
0&1&0&0&1&0&0&1&1&0&1&0&0&1&1&0&1&1&0&1\\
0&0&1&0&0&1&0&1&0&1&0&1&0&1&0&1&1&0&1&1\\
0&0&0&1&0&0&1&0&1&1&0&0&1&0&1&1&0&1&1&1
\end{pmatrix}.$$
There are two double cosets. The non-trivial one can be represented by the permutation \(B_i\mapsto B_{21-i}\) 
which is the same as for GM$_6$ switching.
\end{example}

\begin{example}[WQH\(_6\)-switching]\label{ex:WQH6} The \((r=4,\lambda=1)\)-design with the $6\times11$ incidence matrix
$$\begin{pmatrix}
1&1&1&1&0&0&0&0&0&0&0\\
1&0&0&0&1&1&1&0&0&0&0\\
1&0&0&0&0&0&0&1&1&1&0\\
0&1&0&0&1&0&0&1&0&0&1\\
0&0&1&0&0&1&0&0&1&0&1\\
0&0&0&1&0&0&1&0&0&1&1
\end{pmatrix}$$
together with the transposition \((1~11)\), gives rise to WQH-switching on 6 vertices.
\end{example}

\begin{example}[AH\(_6\)-switching]\label{ex:AH6} The \((r=7,\lambda=3)\)-design on six points with incidence matrix
$$\begin{pmatrix}
1&1&1&1&1&1&1&0&0&0&0&0&0&0\\
1&1&1&0&0&0&0&1&1&1&1&0&0&0\\
1&0&0&1&1&0&0&1&1&0&0&1&1&0\\
1&0&0&0&0&1&1&0&0&1&1&1&1&0\\
0&1&0&1&0&1&0&1&0&1&0&1&0&1\\
0&1&0&0&1&0&1&0&1&0&1&1&0&1
\end{pmatrix}$$
together with the permutation $(5~6~8)(7~10~9)$, yields the matrix $R_6$ that is used for AH$_6$ switching, see Section~\ref{sec:AH}.
\end{example}

\subsubsection{Classification of switching methods from simple designs on at most 8 points with constant block size}\label{sec:classification_simple_constant}

We continue the classification, but now restrict to switching methods from designs on \(v\) points, \(v\leq8\), with \emph{constant} block size \(k\). Again, larger values of \(v\) become computationally difficult. To avoid trivial cases, we assume that \(k\geq2\). We also assume that \(k\leq\frac{v}2\) because replacing a design by its complementary design does not change the \(R\) matrix from Corollary~\ref{cor:samedesign}. GAP code for the classification can be found on \url{https://github.com/robinsimoens/design-switching} and leads to the thirteen methods listed in Table~\ref{tab:classification_simple_constant}.

\begin{table}[ht]
    \centering
    \begin{tabular}{c|c|l|l|c}
        $v$ & \# methods & Method name & Reference & $k$\\
        \hline\hline
        4 & 1 & GM$_4$ switching & Example~\ref{ex:GM4} & $2$\\
        \hline
        5 & 0 & & \\
        \hline
        6 & 1 & GM$_6$ switching & Example~\ref{ex:GM6} & $3$\\
        \hline
        7 & 1 & Fano switching & Example~\ref{ex:fano} & $3$\\
        \hline
        8 & 10 & AG$(3,2)$-switching & Example~\ref{ex:AG32} & $4$
    \end{tabular}
    \caption{All switching methods from simple designs on \(v\) points, \(v\leq8\), with constant block size \(k\leq\frac{v}2\).}
    \label{tab:classification_simple_constant}
\end{table}

There are ten different switching methods with a switching set of size eight. Surprisingly, they can all be derived from the affine design AG$(3,2)$. For this reason, we bundle them under the name AG$(3,2)$-switching. They are listed in the following example.

\begin{example}[AG\((3,2)\)-switching]\label{ex:AG32} The eight points and fourteen planes of the affine space AG\((3,2)\) form an \((r=7,\lambda=3)\)-design with incidence matrix
$$N=\begin{pmatrix}
1&1&1&1&1&1&1&0&0&0&0&0&0&0\\
1&1&1&0&0&0&0&1&1&1&1&0&0&0\\
1&0&0&1&1&0&0&1&1&0&0&1&1&0\\
1&0&0&0&0&1&1&0&0&1&1&1&1&0\\
0&1&0&1&0&1&0&1&0&1&0&1&0&1\\
0&1&0&0&1&0&1&0&1&0&1&1&0&1\\
0&0&1&1&0&0&1&0&1&1&0&0&1&1\\
0&0&1&0&1&1&0&1&0&0&1&0&1&1
\end{pmatrix}.$$
There are fourteen double cosets with representatives:

\begin{align*}
    &\pi_1 = \mathrm{id}, & \pi_8 &= (3~12)(5~10)(6~8)(7~9),\\
    &\pi_2 = (6~7)(8~9), & \pi_9 &= (3~12)(5~9~7~10~6~8),\\
    &\pi_3 = (5~6~7)(8~10~9), & \pi_{10} &= (3~12)(5~10)(6~8~9~7),\\
    &\pi_4 = (3~4)(5~6~7)(8~10~9)(11~12), & \pi_{11} &= (6~7~9~8),\\
    &\pi_5 = (3~12)(5~10)(6~9)(7~8), & \pi_{12} &= (3~12)(5~9~7)(6~8~10),\\
    &\pi_6 = (3~12)(5~10)(6~9), & \pi_{13} &= (5~6~7~10~9~8),\\
    &\pi_7 = (7~8), & \pi_{14} &= (3~11~12~4)(5~9~7)(6~8~10).\\
\end{align*}

Defining $R_i=\frac14\left(N(N^{\pi_i})^T-3J_8\right)$, following the notation of Corollary~\ref{cor:samedesign}, we obtain the fourteen matrices in Table~\ref{tab:AG32}.

\begin{figure}[p]
    \centering
{\footnotesize
\begin{align*}
 R_1 &= I_8
 & R_8 &= \frac12 
 \left(\arraycolsep=1.5pt\def\arraystretch{.9}\begin{array}{rrrrrrrr}
  0&1&1&0&1&0&0&-1\\
  1&0&0&1&0&1&-1&0\\
  1&0&0&1&0&-1&1&0\\
  0&1&1&0&-1&0&0&1\\
  1&0&0&-1&1&0&0&1\\
  0&1&-1&0&0&1&1&0\\
  0&-1&1&0&0&1&1&0\\
  -1&0&0&1&1&0&0&1
 \end{array}\right)\\
 R_2 &= \frac12 
 \left(\arraycolsep=1.5pt\def\arraystretch{.9}\begin{array}{rrrrr}
  2I_4 & \0 & \0 & \0 & \0 \\
  \0& 1 & 1 & 1 & -1 \\
  \0& 1 & 1 & -1 & 1 \\
  \0& 1 & -1 & 1 & 1 \\
  \0& -1 & 1 & 1 & 1
 \end{array}\right)
 & R_9 &= \frac12 
 \left(\arraycolsep=1.5pt\def\arraystretch{.9}\begin{array}{rrrrrrrr}
  0&1&1&0&1&0&0&-1\\
  1&0&0&1&0&1&-1&0\\
  1&0&1&0&-1&0&1&0\\
  0&1&0&1&0&-1&0&1\\
  1&0&0&-1&1&0&0&1\\
  0&1&-1&0&0&1&1&0\\
  0&-1&0&1&1&0&1&0\\
  -1&0&1&0&0&1&0&1
 \end{array}\right)\\
  R_3 &= \frac12 
 \left(\arraycolsep=1.5pt\def\arraystretch{.9}\begin{array}{rrrrrrr}
  2I_2& \0& \0& \0&  \0&  \0&  \0 \\
  \0&   1&   1&   1&  -1&  0&   0 \\
  \0&   1&   1&  -1&   1&   0&   0 \\
  \0&   0&   0&   1&   1&   1&  -1 \\
  \0&   0&   0&   1&   1&  -1&   1 \\
  \0&   1&  -1&   0&   0&   1&   1 \\
  \0&  -1&   1&   0&   0&   1&   1
 \end{array}\right)
 & R_{10} &= \frac14 
 \left(\arraycolsep=1.5pt\def\arraystretch{.9}\begin{array}{rrrrrrrr}
1&1&1&1&1&1&1&-3\\
1&1&1&1&1&1&-3&1\\
1&1&1&1&1&-3&1&1\\
1&1&1&1&-3&1&1&1\\
3&-1&-1&-1&1&1&1&1\\
-1&3&-1&-1&1&1&1&1\\
-1&-1&3&-1&1&1&1&1\\
-1&-1&-1&3&1&1&1&1
\end{array}\right) \\
  R_4 &= \frac12 
 \left(\arraycolsep=1.5pt\def\arraystretch{.9}\begin{array}{rrrrrrrr}
  2 &  \0& \0&\0&\0&\0&\0&\0\\
  \0&  1&   1&   0&   1&   0&   0&  -1 \\
  \0&  1&   0&   1&   0&   -1&  0&   1 \\
  \0&  0&   1&   1&  -1&   1&   0&   0 \\
  \0&  1&  -1&   0&   0&   1&   1&   0 \\
  \0&  0&   0&   0&   1&   1&  -1&   1 \\
  \0&  0&   1&  -1&   0&   0&   1&   1 \\
  \0& -1&   0&   1&   1&   0&   1&   0
 \end{array}\right)
 & R_{11} &= \frac14 
 \left(\arraycolsep=1.5pt\def\arraystretch{.9}\begin{array}{rrrrrrrr}
3&1&1&-1&1&-1&-1&1\\
1&3&-1&1&-1&1&1&-1\\
1&-1&3&1&-1&1&1&-1\\
-1&1&1&3&1&-1&-1&1\\
-1&1&1&-1&3&1&1&-1\\
1&-1&-1&1&1&3&-1&1\\
1&-1&-1&1&1&-1&3&1\\
-1&1&1&-1&-1&1&1&3
\end{array}\right)\\
  R_5 &= \frac12 
 \left(\arraycolsep=1.5pt\def\arraystretch{.9}\begin{array}{rrrrrrrr}
  0&1&1&0&1&0&0&-1\\
  1&0&0&1&0&1&-1&0\\
  1&0&0&1&0&-1&1&0\\
  0&1&1&0&-1&0&0&1\\
  1&0&0&-1&0&1&1&0\\
  0&1&-1&0&1&0&0&1\\
  0&-1&1&0&1&0&0&1\\
  -1&0&0&1&0&1&1&0
 \end{array}\right)
 & R_{12} &= \frac14 
 \left(\arraycolsep=1.5pt\def\arraystretch{.9}\begin{array}{rrrrrrrr}
1&1&1&1&1&1&1&-3\\
1&1&1&1&1&1&-3&1\\
3&-1&1&1&-1&-1&1&1\\
-1&3&1&1&-1&-1&1&1\\
1&1&-1&-1&3&-1&1&1\\
1&1&-1&-1&-1&3&1&1\\
-1&-1&3&-1&1&1&1&1\\
-1&-1&-1&3&1&1&1&1
\end{array}\right)\\
  R_6 &= \frac14 
 \left(\arraycolsep=1.5pt\def\arraystretch{.9}\begin{array}{rrrrrrrr}
  1&1&1&1&1&1&1&-3\\
  1&1&1&1&1&1&-3&1\\
  1&1&1&1&1&-3&1&1\\
  1&1&1&1&-3&1&1&1\\
  1&1&1&-3&1&1&1&1\\
  1&1&-3&1&1&1&1&1\\
  1&-3&1&1&1&1&1&1\\
  -3&1&1&1&1&1&1&1
 \end{array}\right)
 & R_{13} &= \frac14 
 \left(\arraycolsep=1.5pt\def\arraystretch{.9}\begin{array}{rrrrrrrr}
3&1&1&-1&1&-1&-1&1\\
1&3&-1&1&-1&1&1&-1\\
-1&1&3&1&1&-1&1&-1\\
1&-1&1&3&-1&1&-1&1\\
1&-1&1&-1&1&3&1&-1\\
-1&1&-1&1&3&1&-1&1\\
1&-1&-1&1&1&-1&3&1\\
-1&1&1&-1&-1&1&1&3
\end{array}\right)\\
  R_7 &= \frac14 
 \left(\arraycolsep=1.5pt\def\arraystretch{.9}\begin{array}{rrrrrrrr}
  3&1&1&-1&1&-1&-1&1\\
  1&3&-1&1&-1&1&1&-1\\
  1&-1&3&1&-1&1&1&-1\\
  -1&1&1&3&1&-1&-1&1\\
  1&-1&-1&1&3&1&1&-1\\
  -1&1&1&-1&1&3&-1&1\\
  -1&1&1&-1&1&-1&3&1\\
  1&-1&-1&1&-1&1&1&3
 \end{array}\right)
 & R_{14} &= \frac14 
 \left(\arraycolsep=1.5pt\def\arraystretch{.9}\begin{array}{rrrrrrrr}
1&1&1&1&1&1&1&-3\\
3&1&1&-1&1&-1&-1&1\\
1&-1&1&3&-1&1&-1&1\\
-1&3&1&1&-1&-1&1&1\\
-1&1&-1&1&3&1&-1&1\\
1&1&-1&-1&-1&3&1&1\\
-1&-1&3&-1&1&1&1&1\\
1&-1&-1&1&1&-1&3&1
\end{array}\right)
\end{align*}}
\caption{Regular orthogonal matrices obtained from the design of points and planes in AG$(3,2)$.}
    \label{tab:AG32}
\end{figure}

The methods obtained from \(\pi_1,\dots,\pi_9\) are, respectively:
\begin{enumerate}
    \item Trivial.
    \item GM$_4$ switching.
    \item AH$_6$ switching.
    \item Fano switching.
In fact, the first four cases are exactly the four methods obtained from the Fano plane (Example~\ref{ex:fano}). This is no coincidence, because the Fano plane is a derived design from the point-plane design of AG$(3,2)$.
    \item GM$_{4+4}$ switching.
    \item GM$_8$ switching.
    \item WQH$_8$ switching.
    \item AH$_8$ switching.
    \item Cube switching.
\end{enumerate}
The methods from \(\pi_{10},\dots,\pi_{14}\) are new. We can prove that they are all reducible to GM$_4$, AH$_6$, GM$_8$ and WQH$_8$ switching using the SageMath \cite{sagemath} code that is available on \url{https://github.com/robinsimoens/design-switching}.

It was shown in \cite{ABS} that AH$_8$ and cube switching are reducible to GM$_4$, AH$_6$ and Fano switching. So all the matrices on the right in Figure~\ref{tab:AG32} lead to reducible methods.
\end{example}

Note that, although AH$_6$ switching appears in Example~\ref{ex:AG32}, we do not consider it to be a method from a design with constant block size, because the size of the switching set (\(6\)) is smaller than the number of points of the design (\(8\)). We only view it as a method coming from a design on six points, such as the design in Example~\ref{ex:AH6}.

\subsection{New switching methods from small designs}\label{sec:othernew}

In this section, we provide a bunch of new switching methods for constructing \(\mathbb{R}\)-cospectral graphs that do not belong to the classifications of Section~\ref{sec:classification_simple} and Section~\ref{sec:classification_simple_constant} because they come from larger (possibly non-simple) designs. We do not aim for any classification here.

\begin{example}[New switching method on seven vertices]\label{ex:new7} There is an $(r=15,\lambda=7)$-design with the following $7\times30$ incidence matrix:
$$\setlength{\arraycolsep}{2.9pt}\begin{pmatrix}
1&1&1&1&1&1&1&1&1&1&1&1&1&1&1&0&0&0&0&0&0&0&0&0&0&0&0&0&0&0\\
1&1&1&1&1&1&1&0&0&0&0&0&0&0&0&1&1&1&1&1&1&1&1&0&0&0&0&0&0&0\\
1&1&1&0&0&0&0&1&1&1&1&0&0&0&0&1&1&1&1&0&0&0&0&1&1&1&1&0&0&0\\
1&1&1&0&0&0&0&0&0&0&0&1&1&1&1&0&0&0&0&1&1&1&1&1&1&1&1&0&0&0\\
1&0&0&1&1&0&0&1&1&0&0&1&1&0&0&1&1&0&0&1&1&0&0&1&1&0&0&1&1&0\\
0&1&0&1&0&1&0&1&0&1&0&1&0&1&0&1&0&1&0&1&0&1&0&1&0&1&0&1&0&1\\
0&0&1&0&1&1&0&0&1&1&0&0&1&1&0&1&0&0&1&1&0&0&1&1&0&0&1&0&1&1
\end{pmatrix}.$$
Together with the permutation \[(1~24)(2~20)(3~16)(4~14)(5~10)(7~30)(8~13)(11~29)(15~28)(17~27)(18~23)(21~26)\] it leads to the regular orthogonal matrix
$$\frac14\left(\begin{array}{r|rrr|rrr}
    1 & -1 & -1 & -1 & 2 & 2 & 2\\
    \hline
    -1 & 1 & 1 & 1 &-2 & 2 & 2\\
    -1 & 1 & 1 & 1 & 2 & -2 & 2\\
    -1 & 1 & 1 & 1 & 2 & 2 & -2\\
    \hline
    2 & -2 & 2 & 2 & 0 & 0 & 0\\
    2 & 2 & -2 & 2 & 0 & 0 & 0\\
    2 & 2 & 2 & -2 & 0 & 0 & 0
\end{array}\right).$$
The corresponding switching method is new. However, it does not provide new examples of cospectral graphs: one can check by computer (using the code that is available on \url{https://github.com/robinsimoens/design-switching}) that it is always reducible to GM$_4$ switching.
\end{example}

\begin{example}[New switching method on eight vertices]\label{ex:new8} Consider the \emph{non-simple} $(r=23,\lambda=11)$-design with the $8\times30$ incidence matrix
$$\setlength{\arraycolsep}{2.9pt}\begin{pmatrix}
1&1&1&2&2&1&1&1&1&2&2&2&2&2&2&0&0&0&0&0&0&0&0&0&0&0&0&0&0&0\\
1&1&1&2&2&1&1&1&1&0&0&0&0&0&0&2&2&2&2&2&2&0&0&0&0&0&0&0&0&0\\
1&1&1&2&0&0&0&0&0&2&2&2&0&0&0&2&2&2&0&0&0&1&1&1&1&2&0&0&0&0\\
1&1&1&0&2&0&0&0&0&2&0&0&2&2&0&2&0&0&2&2&0&1&1&1&1&0&2&0&0&0\\
1&0&0&2&0&1&1&0&0&0&2&0&2&2&0&2&0&0&2&0&2&1&1&0&0&2&0&1&1&0\\
1&0&0&0&2&1&1&0&0&2&2&0&0&0&2&0&2&2&2&0&0&1&1&0&0&0&2&1&1&0\\
0&1&0&2&0&1&0&1&0&2&0&0&2&0&2&0&2&0&2&2&0&1&0&1&0&2&0&1&0&1\\
0&1&0&0&2&1&0&1&0&0&2&2&2&0&0&2&2&0&0&0&2&1&0&1&0&0&2&1&0&1\\
\end{pmatrix}.$$
The entries ``$2$'' indicate that a blocks has multiplicity $2$.
Together with the permutation $(4~5)(10~16)(15~21)(26~27)$
, it gives rise to the orthogonal matrix
$$\frac13\left(\begin{array}{rrrr|rrrr}
2&1&0&0&1&-1&-1&1\\
1&2&0&0&-1&1&1&-1\\
0&0&2&1&-1&1&-1&1\\
0&0&1&2&1&-1&1&-1\\
\hline
1&-1&-1&1&1&2&0&0\\
-1&1&1&-1&2&1&0&0\\
-1&1&-1&1&0&0&1&2\\
1&-1&1&-1&0&0&2&1
\end{array}\right).$$
Up to symmetry, there are ten different $A_C$ matrices for which the method is not reducible to other smaller known methods, see Figure~\ref{tab:ACmatsnew8}. For all 62 other matrices, it is reducible to WQH$_6$ switching.
\end{example}

\begin{example}[New switching method on eight vertices]\label{ex:new8b}
The regular orthogonal matrix
\[\frac15\mathrm{circulant}(3,1,2,-1,-2,1,2,-1).\]
gives rise to a switching method that is not reducible to any of the previous methods. There are 98 different $A_C$ matrices up to symmetry. We do not list them.
\end{example}

\newcommand{\colsep}{.6pt}
\newcommand{\stret}{.5}
\begin{figure}[ht]
$$
\left(\arraycolsep=\colsep\def\arraystretch{\stret}\begin{array}{cccccccc}
0&0&0&0&0&0&0&1\\0&0&0&0&0&0&1&0\\0&0&0&1&1&0&0&0\\0&0&1&0&0&1&0&0\\0&0&1&0&0&0&0&0\\0&0&0&1&0&0&0&0\\0&1&0&0&0&0&0&1\\1&0&0&0&0&0&1&0
\end{array}\right)
\left(\arraycolsep=\colsep\def\arraystretch{\stret}\begin{array}{cccccccc}
0&0&0&0&0&0&0&1\\0&0&0&0&0&0&1&0\\0&0&0&1&1&0&0&0\\0&0&1&0&0&1&0&0\\0&0&1&0&0&0&1&1\\0&0&0&1&0&0&1&1\\0&1&0&0&1&1&0&1\\1&0&0&0&1&1&1&0
\end{array}\right)
\left(\arraycolsep=\colsep\def\arraystretch{\stret}\begin{array}{cccccccc}
0&0&0&0&0&0&0&1\\0&0&0&0&0&0&1&0\\0&0&0&1&1&0&1&1\\0&0&1&0&0&1&1&1\\0&0&1&0&0&0&0&0\\0&0&0&1&0&0&0&0\\0&1&1&1&0&0&0&1\\1&0&1&1&0&0&1&0\\
\end{array}\right)
\left(\arraycolsep=\colsep\def\arraystretch{\stret}\begin{array}{cccccccc}
0&0&0&0&0&0&0&1\\0&0&0&0&0&0&1&0\\0&0&0&1&1&0&1&1\\0&0&1&0&0&1&1&1\\0&0&1&0&0&0&1&1\\0&0&0&1&0&0&1&1\\0&1&1&1&1&1&0&1\\1&0&1&1&1&1&1&0
\end{array}\right)
\left(\arraycolsep=\colsep\def\arraystretch{\stret}\begin{array}{cccccccc}
0&0&0&0&0&1&0&1\\0&0&0&0&1&0&1&0\\0&0&0&0&0&1&1&0\\0&0&0&0&1&0&0&1\\0&1&0&1&0&0&0&0\\1&0&1&0&0&0&0&0\\0&1&1&0&0&0&0&0\\1&0&0&1&0&0&0&0
\end{array}\right)$$
$$
\left(\arraycolsep=\colsep\def\arraystretch{\stret}\begin{array}{cccccccc}
0&0&0&0&0&1&0&1\\0&0&0&0&1&0&1&0\\0&0&0&0&0&1&1&0\\0&0&0&0&1&0&0&1\\0&1&0&1&0&0&1&1\\1&0&1&0&0&0&1&1\\0&1&1&0&1&1&0&0\\1&0&0&1&1&1&0&0
\end{array}\right)
\left(\arraycolsep=\colsep\def\arraystretch{\stret}\begin{array}{cccccccc}
0&0&0&0&1&0&1&1\\0&0&0&0&0&1&1&1\\0&0&0&1&0&0&1&0\\0&0&1&0&0&0&0&1\\1&0&0&0&0&1&0&0\\0&1&0&0&1&0&0&0\\1&1&1&0&0&0&0&0\\1&1&0&1&0&0&0&0
\end{array}\right)
\left(\arraycolsep=\colsep\def\arraystretch{\stret}\begin{array}{cccccccc}
0&0&0&0&1&0&1&1\\0&0&0&0&0&1&1&1\\0&0&0&1&0&0&1&0\\0&0&1&0&0&0&0&1\\1&0&0&0&0&1&1&1\\0&1&0&0&1&0&1&1\\1&1&1&0&1&1&0&0\\1&1&0&1&1&1&0&0
\end{array}\right)
\left(\arraycolsep=\colsep\def\arraystretch{\stret}\begin{array}{cccccccc}
0&0&0&0&1&0&1&1\\0&0&0&0&0&1&1&1\\0&0&0&1&1&1&1&0\\0&0&1&0&1&1&0&1\\1&0&1&1&0&1&0&0\\0&1&1&1&1&0&0&0\\1&1&1&0&0&0&0&0\\1&1&0&1&0&0&0&0
\end{array}\right)
\left(\arraycolsep=\colsep\def\arraystretch{\stret}\begin{array}{cccccccc}
0&0&1&1&0&1&0&1\\0&0&1&1&1&0&1&0\\1&1&0&0&0&1&1&0\\1&1&0&0&1&0&0&1\\0&1&0&1&0&0&1&1\\1&0&1&0&0&0&1&1\\0&1&1&0&1&1&0&0\\1&0&0&1&1&1&0&0
\end{array}\right)$$
\caption{Adjacency matrices for which the switching method of Example~\ref{ex:new8} is irreducible, up to conjugation and complementation.}
    \label{tab:ACmatsnew8}
\end{figure}

\begin{example}[AG$(2,3)$-switching]\label{ex:AG23}

The affine plane of order 3 is an $(r=4, \lambda=1)$-design on nine points. It has twelve lines that can be permuted in essentially five (the number of double cosets) different ways that preserve parallelism:
\begin{enumerate}
    \item One permutation is trivial.
    \item A transposition of two parallel lines corresponds to WQH$_6$-switching (see Remark~\ref{rem:planesreducible}).
    \item Another one gives rise to the method from Example~\ref{ex:new8}.
    \item A cyclic permutation of the three lines in a parallel class leads to the regular orthogonal matrix \[\frac13\mathrm{circulant}(2,1,0,-1,1,0,-1,1,0)\] which, up to permutation, is the same as \(Q(3,3,1)\) in \cite[Section~4.2]{ABScounting}.
    \item Finally, there is a permutation that yields the (new) matrix \[\frac13\mathrm{circulant}(\mathrm{circulant}(2,1,0),\mathrm{circulant}(0,-1,1),\mathrm{circulant}(0,-1,1)).\]
\end{enumerate}
\end{example}

\begin{example}[Paley biplane switching]\label{ex:paley}
The Paley biplane is the unique $2$-\allowbreak$(11,5,2)$-design. Following the same method as in the prior subsection, we obtain $126$ double cosets of $H\backslash G/H$, where \(G\) is the group of permutations of blocks that preserve their intersection sizes, and \(H\) is the automorphism group of the design, acting on its blocks.
More precisely, we have a distribution of $0^1, 6^1, 8^3, 9^8, 10^{36}, 11^{77}$, where we mean by $a^b$ that
\[
 R = \begin{pmatrix}
      I_{n-a} & 0 \\
      0 & R'
     \end{pmatrix}
\]
for $b$ double cosets.
Due to their large number, we did not investigate them in detail.
\end{example}

\begin{example}[PG$(2,3)$-switching]\label{ex:PG23} The projective plane of order 3 is an $(r=4,\lambda=1)$-design on thirteen points. There are 252 double cosets of $H\backslash G/H$, where \(G\cong \Sym(13)\) and \(H\cong\mathrm{P\Gamma L}(2,3)\) is the automorphism group of PG$(2,3)$, acting on its lines. 
Following the notation of Example~\ref{ex:paley}, these are distributed as $0^1, 6^1, 8^1, 9^3, 10^4, 11^8, \allowbreak 12^{47},\allowbreak 13^{187}$.
Again, there are too many to properly describe them all.

All obtained regular orthogonal matrices have level 3 (except the identity matrix). This suggests that classifying switching methods of level 3 is probably infeasible, unlike those of level 2, as was done in \cite{ABS}.
\end{example}

\section{Incidence structures from switching methods} \label{sec:notallswitchings}

We can also go the other way around and construct incidence structures from a given switching method. Let \(\cP\) be the set of vertices of the switching set. Let \(\cB_1\) be the set of possible neighbourhoods within \(\cP\) for a vertex outside the switching set, before switching. Let \(\cB_2\) be the set of possible neighbourhoods within \(\cP\) for such a vertex after switching.

Algebraically, if the switching method corresponds to a conjugation of the adjacency matrix with the orthogonal matrix \(Q(R)\), then the characteristic vectors of the blocks of \(\cB_1\) are exactly those 01-vectors \(\chi\) such that \(R^T\chi\) is again a 01-vector.
The characteristic vectors of the blocks of \(\cB_2\) are all their images \(R^T\chi\).
The incidence structures $(\cP,\cB_1)$ and $(\cP,\cB_2)$ are not necessarily \((r,\lambda)\)-designs. But if they are, then the switching method is exactly the method obtained through Theorem~\ref{thm:main}.

If the matrix \(R\) is regular, then every point lies in \(r=\frac12|\cB_1|\) blocks (consider the map \(\chi\mapsto\1-\chi\) where \(\1\) is the all-one vector). Still, the property that every two distinct points lie on the same number of blocks, is in general not true. Sometimes adding multiplicities to certain blocks can fix this. In general it can not. We give an example of a switching method for which \(R\) is regular and indecomposable, but the incidence structure is not an \((r,\lambda)\)-design and cannot be made such a design by adding multiplicities.

\begin{prop}\label{thm:newnodesign}
    Let \(\Gamma\) be a graph with a subgraph \(C\) with vertex set \(V(C)=\{1,\dots,6\}\) such that:
    \begin{enumerate}[(i)]
        \item The adjacency matrix of \(C\) is either edgeless, or the union of two triangles on \(\{1,4,5\}\) and \(\{2,3,6\}\), or the union of the edge \(\{1,2\}\) and a complete graph on \(\{3,4,5,6\}\).
        \item For every vertex \(x\notin C\), \[\deg(x,\{4,5,6\})\equiv2\cdot\left(\deg(x,\{1\})-\deg(x,\{2,3\})\right)\pmod5.\]
    \end{enumerate}
    Make every vertex \(x\notin C\) that has exactly three neighbours in \(C\), adjacent to the other three instead. The resulting graph is \(\mathbb{R}\)-cospectral with \(\Gamma\).
\end{prop}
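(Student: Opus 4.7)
The plan is to exhibit a $6\times 6$ symmetric orthogonal matrix $R$ with $R\mathbf{1}=\mathbf{1}$ and apply conjugation by the block matrix $Q(R)=\begin{pmatrix}R & O \\ O & I\end{pmatrix}$ to the adjacency matrix of $\Gamma$. Once $Q(R)^T A Q(R)$ is verified to be an adjacency matrix and to agree with the graph produced by the switching rule, the $\mathbb{R}$-cospectrality follows from the criterion in Section~\ref{sec:prelim} that regular orthogonal conjugation preserves $\mathbb{R}$-cospectrality.

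I would first determine what $R^T$ must do. A case analysis of the mod-$5$ congruence in (ii) shows that exactly twelve $01$-vectors $\chi\in\{0,1\}^6$ satisfy it: six with $\mathbf{1}^T\chi\neq 3$ (namely $\mathbf{0}$, $\mathbf{1}$, $(1,1,0,0,0,0)^T$, $(1,0,1,0,0,0)^T$ and their two complements) and six with $\mathbf{1}^T\chi=3$ (the three sum-$3$ vectors of the shape $(0,1,1,*,*,*)^T$ with exactly one $1$ in the last three positions, together with their three complements of shape $(1,0,0,*,*,*)^T$). Since the switching rule fixes the first six vectors and replaces each $\chi$ in the second group by $\mathbf{1}-\chi$, and since the six sum-$3$ vectors split into three complementary pairs, this assignment is self-consistent and forces $R^T$ to be the reflection that is $+1$ on $\operatorname{span}\bigl(\mathbf{1},(1,1,0,0,0,0)^T,(1,0,1,0,0,0)^T\bigr)$ and $-1$ on the orthogonal complement, which is spanned by the three differences $2\chi-\mathbf{1}$ for the sum-$3$ vectors $\chi$ of the form $(0,1,1,*,*,*)^T$.

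Computing this reflection explicitly yields
\[
R=\tfrac15\begin{pmatrix}
2 & 3 & 3 & -1 & -1 & -1 \\
3 & 2 & -3 & 1 & 1 & 1 \\
3 & -3 & 2 & 1 & 1 & 1 \\
-1 & 1 & 1 & -2 & 3 & 3 \\
-1 & 1 & 1 & 3 & -2 & 3 \\
-1 & 1 & 1 & 3 & 3 & -2
\end{pmatrix},
\]
and one verifies directly that $R=R^T$, $R^2=I$, $R\mathbf{1}=\mathbf{1}$, and that $R^T$ acts on each of the twelve admissible vectors as prescribed. Consequently, for every $x\notin C$, the column $R^T v_x$ describing the new neighbourhood of $x$ is a $01$-vector encoding exactly the switched neighbourhood.

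The remaining and main step is to verify that $R^T A_C R$ is an adjacency matrix for each of the three $A_C$ listed in (i). I would establish the stronger statement $R A_C = A_C R$, giving $R^T A_C R = A_C$, so that the induced subgraph on $C$ is unchanged by the switching (consistent with the statement, which only alters the outside adjacencies). Commutation is trivial for $A_C=O$, and a short block multiplication confirms it for both the two-triangle matrix on $\{1,4,5\}\cup\{2,3,6\}$ and the $K_2\sqcup K_4$ matrix on $\{1,2\}\cup\{3,4,5,6\}$. This commutativity is the algebraic reason those three adjacency matrices appear in the hypothesis. With all pieces in place, $Q(R)^T A Q(R)$ is the adjacency matrix of the switched graph, and the regular orthogonality of $Q(R)$ completes the proof.
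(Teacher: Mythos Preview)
Your proposal is correct and follows essentially the same route as the paper: exhibit the regular orthogonal matrix $R$, check that $R^T$ sends each admissible $01$-column to the column prescribed by the switching rule, and verify $R^TA_CR=A_C$ for the three listed subgraphs. The only difference is presentational: the paper simply writes down $R$ and defers both checks to the computer, whereas you derive $R$ conceptually as the reflection fixing $\operatorname{span}(\mathbf 1,e_1+e_2,e_1+e_3)$ and sketch the verifications by hand; this is a nice explanation of \emph{why} this particular $R$ works, but not a different argument.
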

\begin{proof}
    This is the combinatorial interpretation of conjugating the adjacency matrix with the regular orthogonal matrix
$$R=\frac15\left(\begin{array}{r|rr|rrr}
    2 & 3 & 3 & -1 & -1 & -1\\
    \hline
    3 & 2 & -3 & 1 & 1 & 1\\
    3 & -3 & 2 & 1 & 1 & 1\\
    \hline
    -1 & 1 & 1 & -2 & 3 & 3\\
    -1 & 1 & 1 & 3 & -2 & 3\\
    -1 & 1 & 1 & 3 & 3 & -2
\end{array}\right).$$
One checks (by computer) that:
\begin{enumerate}
    \item The possible matrices \(A_C\) for which \(R^TA_CR\) is an adjacency matrix again, are the adjacency matrices of the graphs of condition (i), up to symmetry (complementation and conjugation). For all three of them, \(R^TA_CR=A_C\).
    \item The 01-vectors \(v\) for which \(R^Tv\) is a 01-vector again, give rise to exactly the neighbourhoods of condition (ii).
    The 01-vectors \(v\) such that \(R^Tv\neq v\), are exactly those with three ones. \qedhere
\end{enumerate}
\end{proof}

One also checks that the above method is not reducible to any of the previously known methods.
We show that it cannot be obtained by design switching:

\begin{prop}
The switching method from Proposition~\ref{thm:newnodesign} is not a special case of Theorem~\ref{thm:main}.
\end{prop}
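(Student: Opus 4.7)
My plan is to argue by contradiction: assume the switching method of Proposition~\ref{thm:newnodesign} arises from Theorem~\ref{thm:main} for some $(r,\lambda)$-designs $\cD_1,\cD_2$ on the point set $\{1,\dots,6\}$, with incidence matrices $N_1,N_2$ producing the matrix $R$ of Proposition~\ref{thm:newnodesign}.

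The first step is to identify the possible blocks of $\cD_1$. The proof of Theorem~\ref{thm:main} shows $N_2=R^TN_1$, so every column of $N_1$ must be a 01-vector $\chi$ with $R^T\chi$ also 01, that is, a block of the set $\cB_1$ from Section~\ref{sec:notallswitchings}. By condition (ii) of Proposition~\ref{thm:newnodesign}, this amounts to a single modular constraint on three aggregated coordinates, and a short enumeration gives $|\cB_1|=12$: the empty block, the full block $\{1,\dots,6\}$, $\{1,2\}$, $\{1,3\}$, the two size-$4$ blocks $\{2,4,5,6\}$ and $\{3,4,5,6\}$, the three size-$3$ blocks through $\{2,3\}$, and the three size-$3$ blocks containing $1$ together with two points of $\{4,5,6\}$.

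Next I will impose the $(r,\lambda)$-design property on a multiset drawn from $\cB_1$. Equating the pair-incidence counts on symmetric pairs of points (for instance $(1,4),(1,5),(1,6)$ or $(2,4),(2,5),(2,6)$) forces the multiplicities to be constant on the obvious orbits, reducing the problem to five parameters: $b$ for the three size-$3$ blocks through $\{2,3\}$, $a$ for the three size-$3$ blocks through $1$, $c$ for the two size-$4$ blocks, $d$ for the two size-$2$ blocks, and $e$ for the full block. The remaining five independent pair-count equations then read $d+e = 2a+e = 3b+e = c+b+e = 2c+a+e = \lambda$. Sequentially eliminating variables in this system quickly forces $b=0$, and hence $a=c=d=0$.

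Thus $\cD_1$ can only consist of copies of $\emptyset$ and $\{1,\dots,6\}$, giving $r=\lambda$. But $R$ has level $5$ and is not the identity, so any realisation via Theorem~\ref{thm:main} requires $r-\lambda$ to be a positive multiple of $5$; this contradicts $r=\lambda$ and completes the argument. The main subtlety I expect is not the arithmetic but the setup: one must carefully verify that in any realisation of $R$ through Theorem~\ref{thm:main} the columns of $N_1$ are genuinely forced to lie in $\cB_1$ (which follows from the identity $N_2=R^TN_1$ proved inside Theorem~\ref{thm:main}), and notice that the $(r,\lambda)$-design equations on $\cD_1$ alone already suffice to reach the contradiction---the structure of $\cD_2$ never enters.
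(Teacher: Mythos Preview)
Your proposal is correct and follows essentially the same approach as the paper: both arguments set up the pair-incidence equations for a putative $(r,\lambda)$-design on the twelve admissible blocks and show that the resulting linear system has no admissible solution. The paper does this tersely by listing six specific pair-equations and asserting that no positive-integer solution exists; you do the same thing with an explicit enumeration of $\cB_1$, a symmetry reduction to the parameters $a,b,c,d,e$, and then the elimination $c=2b$, $a=-b$, $2a=3b \Rightarrow b=0$.

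Two small remarks on presentation. First, what you call ``five independent pair-count equations'' is really five distinct expressions set equal, hence four equations in the four unknowns $a,b,c,d$ once $e$ cancels; this is exactly enough to force the unique solution $a=b=c=d=0$, so your claim is fine, just slightly over-counted. Second, your final contradiction via ``$r-\lambda$ must be a positive multiple of $5$'' is correct (since $(r-\lambda)R = N_1N_2^T-\lambda J$ is integral and $R$ has level $5$), but more than you need: already $r=\lambda$ makes $R$ in Theorem~\ref{thm:main} undefined, and in any case a design with only the empty and full blocks cannot accommodate the non-trivial neighbourhoods that condition~(ii) of Proposition~\ref{thm:newnodesign} permits. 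Your observation that only the $\cD_1$ constraints are needed---$\cD_2$ never enters---is a nice point that the paper leaves implicit.
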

\begin{proof}
    Suppose, by contradiction, that the method is coming from an $(r,\lambda)$-design. It has twelve blocks with certain multiplicities. They correspond to the possible neighbourhoods of a vertex \(x\notin C\), see condition (ii) of Proposition~\ref{thm:newnodesign}. If we denote \(m_B\) for the multiplicity of block \(B\), then we have, by looking at the number of blocks through the points \(2\) and \(3\), that \(\lambda=m_{\{2,3,4\}}+m_{\{2,3,5\}}+m_{\{2,3,6\}}+m_{\{1,\dots,6\}}\). Similarly, by looking at the pairs of points \(\{2,4\}\), \(\{2,5\}\), \(\{3,6\}\), \(\{4,5\}\) and \(\{4,6\}\), we get five more equations that have (positive) integer solutions, but this is not the case.
\end{proof}

\section{Applications}\label{sec:appl}

Here, we list several constructions of cospectral graphs using Theorem \ref{thm:main}.
We do not aim for completeness. For instance, we skip the countless constructions
in the tradition of Wallis \cite{Wallis} which already explicitly utilise designs.

\subsection{Trades in designs}

Theorem~\ref{thm:main} is closely linked to the concept of trades in designs, see \cite{Billington2003,KMT,Ostergard}. If the design used in Theorem~\ref{thm:main} is symmetric and our graph is the intersection graph of a partial linear space, then we arrive at the following result.\footnote{%
Trades in designs are clearly known. Our point here is that they yield cospectral graphs.}

\begin{prop}\label{prop:traddesigns}
    Consider a partial linear space $\cD=(\cP,\cL)$ with a projective plane \(\Pi_1=(\cP_1,\cL_1)\) as a subgeometry, that is, \(\cP_1\subseteq\cP\) and \(\cL_1\subseteq\cL\). If $\Pi_2=(\cP_1,\cL_2)$ is a projective plane on the same point set, then the block graphs of $(\cP,\cL)$ and $(\cP,(\cL \setminus \cL_1) \cup \cL_2)$ are $\mathbb{R}$-cospectral.
\end{prop}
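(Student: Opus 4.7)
The plan is to reformulate the setting of Proposition~\ref{prop:traddesigns} as a direct application of Theorem~\ref{thm:main} by working with the \emph{dual} designs. Let $q$ be the common order of $\Pi_1$ and $\Pi_2$; the two planes have the same order since they share the same point set, of cardinality $q^2+q+1$. Let $\cD_i$ be the dual of $\Pi_i$, whose ``points'' are the lines in $\cL_i$ and whose ``blocks'' are the pencils, naturally indexed by the elements of $\cP_1$. Each $\cD_i$ is then a symmetric $(r,\lambda)=(q+1,1)$-design on $v=q^2+q+1$ points, and pairing blocks via the common index set $\cP_1$ turns the standard identity $N_i^T N_i = qI_v + J_v$ into the required $N_1^T N_1 = N_2^T N_2$.

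Next I would take $\Gamma$ to be the block graph of $(\cP,\cL)$ and $C$ the induced subgraph on $\cL_1$, identifying vertices of $C$ with points of $\cD_1$ by the identity and with points of $\cD_2$ via any fixed bijection $\cL_1 \to \cL_2$. Condition (i) of Theorem~\ref{thm:main} is then immediate: any two distinct lines of a projective plane meet, so $A_C = J_v - I_v$, and since $R$ is regular and orthogonal, $R^T(J_v-I_v)R = J_v - I_v$. This is again a complete graph, matching the block-graph structure of $\Pi_2$ on $\cL_2$.

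The critical step, and the one I expect to demand the most care, is verifying condition (ii'). Here the partial linear space assumption is essential: for $\ell \in \cL \setminus \cL_1$, I would argue that $|\ell \cap \cP_1| \leq 1$, because two distinct points in $\ell \cap \cP_1$ would lie both on $\ell$ and on the unique line of $\Pi_1$ through them, contradicting the axiom that two distinct lines of a partial linear space meet in at most one point. Consequently the neighbours of $\ell$ in $\cL_1$ (in the block graph) form either the pencil of $\Pi_1$ through the unique point of $\ell \cap \cP_1$ (case (ii)(c) of Remark~\ref{remark:main}) or the empty set (case (ii)(b)), so (ii') holds.

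Applying Theorem~\ref{thm:main} then yields $\mathbb{R}$-cospectrality with $\Gamma$. To close the argument I would check that the switched graph really is the block graph of $(\cP,(\cL\setminus\cL_1)\cup\cL_2)$: within $C$ the new edges form a complete graph, which is exactly the block-graph structure of $\Pi_2$; any external $\ell$ previously adjacent to the pencil of $\Pi_1$ through a point $p$ becomes adjacent to the pencil of $\Pi_2$ through the same $p$, which is exactly its block-graph neighbourhood in $\cL_2$; external vertices with empty neighbourhood stay isolated from $\cL_2$; and adjacencies inside $\cL \setminus \cL_1$ are untouched.
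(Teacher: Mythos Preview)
Your proposal is correct and follows essentially the same approach as the paper: dualise the two projective planes so that their pencils (indexed by the common set $\cP_1$) serve as blocks, observe that the induced subgraph on $C=\cL_1$ is complete so condition (i) is automatic, and use the partial-linear-space axiom to show each external line meets $\cP_1$ in at most one point, giving condition (ii'). Your write-up is in fact more explicit than the paper's, in particular in spelling out $N_i^TN_i=qI_v+J_v$ and in verifying that the switched graph coincides with the block graph of $(\cP,(\cL\setminus\cL_1)\cup\cL_2)$.
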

\begin{proof}
    Let $\Pi_1^*$ and $\Pi_2^*$ denote the dual projective planes of $\Pi_1$ and $\Pi_2$ respectively. Since $\Pi_1$ and $\Pi_2$ have the same number of points, they have the same order. The induced subgraph on $\cL_1$ is complete. Choose $L\in\cL\setminus\cL_1$. If \(L\cap\cP_1=\emptyset\), then $L$ has no neighbours in \(\cL_1\). If \(L\cap\cP_1=\{p\}\) for some point \(p\), then the neighbourhood of $L$ in $\cL_1$ is the set of lines through \(p\), which corresponds to a block of $\Pi_1^*$, and similarly for the neighbourhood of $L$ in $\cL_2$. Thus we can apply Theorem \ref{thm:main} with the designs $\Pi_1^*$ and $\Pi_2^*$ to obtain the result.
\end{proof}

\subsection{Cospectral mates for \texorpdfstring{$q$}{q}-triangular graphs}

Let us discuss a specific case of Proposition \ref{prop:traddesigns}.
Let \(q\) be a prime power and \(n\geq4\). The \emph{q-triangular graph} $J_q(n, 2)$ has as vertices the lines of PG\((n-1,q)\) where two vertices are adjacent if they intersect.

Let $\cP$ and $\cB$ be the point set and line set of PG\((n-1,q)\) respectively. The graph $J_q(n, 2)$ is exactly the block graph of $(\cP,\cB)$. Every projective plane PG\((2,q)\subseteq\PG(n-1,q)\) forms a symmetric $2$-$(q^2+q+1, q+1, 1)$ subdesign. Proposition~\ref{prop:traddesigns} tells us that we can replace it by another $2$-$(q^2+q+1, q+1, 1)$-design, that is, a projective plane of order $q$,
and get an \(\mathbb{R}\)-cospectral graph. We can also apply a permutation that is not an automorphism. In fact, every permutation \(\pi\) of \(\cB\) that is not an automorphism, creates maximal cliques of size \(q+2\). This new graph cannot be isomorphic to the original one, (re)proving that

\begin{theorem}[\cite{IM}]
    $J_q(n, 2)$, \(n\geq4\), is not determined by its spectrum.
\end{theorem}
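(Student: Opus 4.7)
The plan is straightforward: apply Proposition~\ref{prop:traddesigns} to obtain an $\mathbb{R}$-cospectral graph, then invoke a maximal-clique invariant to establish non-isomorphism.

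First, I would fix a subplane $\Pi_1=\PG(2,q)\subseteq\PG(n-1,q)$ (which exists for $n\geq4$), with point set $\cP_1\subseteq\cP$ and line set $\cL_1\subseteq\cB$; this is a symmetric $2$-$(q^2+q+1,q+1,1)$ subdesign of the line design $(\cP,\cB)$. Choose a permutation $\sigma\in\Sym(\cP_1)$ not induced by a collineation of $\Pi_1$; such $\sigma$ exists because the collineation group of $\Pi_1$ is a proper subgroup of $\Sym(\cP_1)$. Then $\sigma(\cL_1)$ is the line set of a projective plane of order $q$ on $\cP_1$, but genuinely different from $\cL_1$ as a set of $(q+1)$-subsets. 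Proposition~\ref{prop:traddesigns} now produces a graph $\Gamma'$, whose vertex set is $(\cB\setminus\cL_1)\cup\sigma(\cL_1)$ and whose edges come from pairwise intersections, that is $\mathbb{R}$-cospectral with $J_q(n,2)$.

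To prove $\Gamma'\not\cong J_q(n,2)$, I would use the following standard fact: every maximal clique of $J_q(n,2)$ is either a pencil (all lines through a fixed point of $\PG(n-1,q)$, of size $(q^{n-1}-1)/(q-1)\geq q^2+q+1$) or a plane clique (all lines in a fixed plane, of size $q^2+q+1$), because any three pairwise-intersecting lines in $\PG(n-1,q)$ are concurrent or coplanar. In particular, every maximal clique of $J_q(n,2)$ has size at least $q^2+q+1$, so it suffices to exhibit a strictly smaller maximal clique in $\Gamma'$.

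Pick $L\in\cL_1\setminus\sigma(\cL_1)$ — which exists because $\sigma(\cL_1)\neq\cL_1$ and the two sets have the same cardinality — and a plane $\Pi\subseteq\PG(n-1,q)$ with $\Pi\cap\Pi_1=L$, which exists for $n\geq4$ by extending $L$ with any point outside $\Pi_1$. The $q^2+q$ lines of $\Pi$ other than $L$ all lie in $\cB\setminus\cL_1$ and hence remain vertices of $\Gamma'$; their pairwise adjacencies are inherited unchanged from $J_q(n,2)$, since the switching only alters edges with at least one endpoint in the subplane set. So they form a clique in $\Gamma'$. For maximality, any $M\in\sigma(\cL_1)$ adjacent to every external line of $\Pi$ would need to contain each of the $q+1$ points of $L$ (the unique $\cP_1$-intersection of each external line ranges over all of $L$), forcing $M=L$ by cardinality; this contradicts $L\notin\sigma(\cL_1)$. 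And any line of $\cB\setminus\cL_1$ outside $\Pi$ meets $\Pi$ in at most one point, so cannot be adjacent to every external line of $\Pi$. The resulting maximal clique has size $q^2+q<q^2+q+1$, which no maximal clique of $J_q(n,2)$ attains, hence $\Gamma'\not\cong J_q(n,2)$. The main conceptual step is the careful bookkeeping of how the switching changes adjacencies between $\sigma(\cL_1)$ and $\cB\setminus\cL_1$; once this is in hand, both the construction of the clique and the maximality verification are routine.
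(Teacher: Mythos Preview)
Your proof is correct and follows essentially the same approach as the paper: apply Proposition~\ref{prop:traddesigns} with a non-collineation permutation of the point set of a subplane, then distinguish the resulting graph from $J_q(n,2)$ via the size of a maximal clique. The only difference is the witness clique: the paper asserts (without details) that the switched graph contains maximal cliques of size $q+2$, whereas you explicitly exhibit a maximal clique of size $q^2+q$ arising from a plane through a ``deleted'' line; both sizes fall below the minimum $q^2+q+1$ of $J_q(n,2)$, so either suffices.
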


We use this example to give a general sketch
for how one can use Corollary \ref{cor:samedesign} to show that there are many
non-isomorphic graphs with a given spectrum whenever design switching is applicable.\footnote{%
The first author believes that the particular result here is well-known,
but we could not find a suitable reference.} This result is stated in Theorem~\ref{thm:many_Johnson}.

We do this in a similar way as in \cite[Section~7.2]{BIK}, but the design is different.
Fix a projective plane \(\alpha\) and consider the design \((\mathcal{P},\mathcal{B})\) where
\begin{align*}
    \mathcal{P}&=\{\text{lines of }\alpha\},\\
    \mathcal{B}&=\{\text{point-pencils of }\alpha\},
\end{align*}
where a \emph{point-pencil} is a set of all lines through a given point. This design allows us to apply Theorem~\ref{thm:main} with any other \((r=q+1,\lambda=1)\)-design. Indeed, each vertex of the graph that is not in \(\cP\) (a line not in \(\alpha\)), is adjacent to either a point-pencil of \(\alpha\) or no line of \(\alpha\). But in order to prove Theorem~\ref{thm:many_Johnson}, it suffices to use Corollary~\ref{cor:samedesign} where we use the same design with the blocks permuted. Given such a permutation \(\pi\), denote the new graph by \(\Gamma_\pi\). Since \(\pi\) permutes the point-pencils, we can (and will) look at it as a permutation of the points of \(\alpha\).

\begin{lemma}
    If \(\pi\notin P\Gamma L(\alpha)\), then there are at least three non-collinear lines \(\ell\) in \(\alpha\) such that \(\ell^\pi\) is not a line.
\end{lemma}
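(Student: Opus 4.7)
The plan is to prove the contrapositive: assume the set of \emph{bad} lines (lines $\ell$ with $\ell^\pi$ not a line of $\alpha$) is contained in a single pencil — that is, there is a point $p$ such that every bad line passes through $p$ — and deduce $\pi\in P\Gamma L(\alpha)$. This subsumes the cases of having fewer than three bad lines (one line or two lines are automatically concurrent), so it is exactly the negation of ``at least three non-concurrent bad lines''. Under the hypothesis, every line $m$ of $\alpha$ not through $p$ satisfies ``$\pi(m)$ is a line''. Setting $p':=\pi(p)$, one checks that $p'\notin\pi(m)$ (otherwise $\pi^{-1}(p')=p\in m$), so $\pi$ induces a bijection from the $q^2$ lines of $\alpha$ not through $p$ onto the $q^2$ lines not through $p'$.

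The heart of the argument is to show that, under this hypothesis, $\pi$ also sends each line through $p$ to a line. Fix $\ell\ni p$ and write $\ell=\{p,a_1,\dots,a_q\}$. For any $i\ne j$, I would argue that the line $\pi(a_i)\pi(a_j)$ must pass through $p'$: if not, then by the bijection above it equals $\pi(m)$ for some unique line $m$ not through $p$, which forces $a_i,a_j\in m$ and hence $m=a_ia_j=\ell\ni p$, a contradiction. Therefore $p',\pi(a_1),\dots,\pi(a_q)$ are pairwise collinear with $p'$ and so lie on one common line through $p'$. Thus $\pi(\ell)$ is a line, and $\pi$ preserves every line of $\alpha$, i.e.\ $\pi\in P\Gamma L(\alpha)$, contradicting the hypothesis of the lemma.

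The main obstacle is precisely this rigidity step — turning ``preserves all lines missing $p$'' into ``preserves all lines''. The pigeonhole/bijection observation that \emph{every} line not through $p'$ is the image of a unique line not through $p$ is what lets us rule out the possibility that two points $\pi(a_i),\pi(a_j)$ sit on a line avoiding $p'$, and this is the key combinatorial ingredient. Once this is in hand the rest is bookkeeping: the contrapositive argument automatically yields both the existence of at least three bad lines and their non-concurrence, since any one or two lines share a common point and would therefore fall under the contradicted hypothesis.
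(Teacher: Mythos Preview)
Your argument is correct. The contrapositive is correctly formulated (any set of bad lines that fails to contain three non-concurrent members is contained in a single pencil), and the rigidity step is sound: the bijection between the $q^2$ lines off $p$ and the $q^2$ lines off $p'$, together with the observation that $\pi(a_i)\pi(a_j)$ cannot be the image of any line off $p$, forces all of $\pi(\ell)$ onto a single line through $p'$. The conclusion $\pi\in P\Gamma L(\alpha)$ then contradicts the hypothesis.

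This is a genuinely different route from the paper's proof. The paper argues directly: it fixes one bad line $\ell$, then shows that for \emph{any} point $p\in\ell$ there must be a second bad line through $p$ (because the $q$ other lines through $p$ would otherwise be $q$ concurrent lines through $p^\pi$, forcing $\ell^\pi$ to be the remaining line of that pencil). Applying this at two distinct points of $\ell$ produces three bad lines that cannot be concurrent. So the paper works \emph{locally} with the pencil through a point on a bad line, while you work \emph{globally} with the complement of a pencil and argue by contrapositive. The paper's version is shorter and yields the three witnesses explicitly; your version proves along the way the slightly stronger rigidity statement that a permutation preserving all lines off a fixed point must be a collineation, which is a clean fact in its own right.
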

\begin{proof}
    There is at least one such line \(\ell\), since otherwise \(\pi\) would be a collineation. Pick a point \(p\in\ell\). If all other lines through \(p\) would be mapped to lines, then \(\ell^\pi\) would be contained in \(\alpha\setminus\bigcup_{\ell'\supseteq p}\ell'^\pi\), which is contained in a line, a contradiction. So there is at least one other such line. Repeating that argument once more with another point on \(\ell\) gives us a third such line.
\end{proof}

\begin{lemma}
    If \(\pi\notin P\Gamma L(\alpha)\), then $\cP$ is recognisable in \(\Gamma_\pi\).
\end{lemma}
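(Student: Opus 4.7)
The plan is to identify $\cP$ in $\Gamma_\pi$ via the maximal cliques of size $q+2$ that the switching creates. Applying the previous lemma to the non-collineation $\pi^{-1}$, we obtain three non-concurrent lines $\ell_1, \ell_2, \ell_3 \in \alpha$ (so, three vertices of $\cP$) such that $\pi^{-1}(\ell_i)$ is not a line. For each $i$, write $\pi^{-1}(\ell_i) = \{p_{i,1}, \ldots, p_{i,q+1}\}$ and pick any point $r_i \in \PG(n-1,q) \setminus \alpha$; then
\[
K_i = \{\ell_i\} \cup \{\overline{p_{i,j} r_i} : 1 \leq j \leq q+1\}
\]
is a clique of $\Gamma_\pi$, since the $q+1$ external lines meet pairwise at $r_i$ and each is adjacent to $\ell_i$ in $\Gamma_\pi$ by the switching rule (as $\pi(p_{i,j}) \in \ell_i$). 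Because $\pi^{-1}(\ell_i)$ is not a line, the planes $\langle p_{i,l}, p_{i,k}, r_i\rangle$ for varying $k \neq l$ are distinct, so their common intersection reduces to $\overline{p_{i,l} r_i}$; a short check then shows that $K_i$ cannot be extended, so $K_i$ is a maximal $(q+2)$-clique.

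Next, I classify all maximal $(q+2)$-cliques of $\Gamma_\pi$. If $K \cap \cP = \emptyset$, then $K$ is also a clique of $J_q(n,2)$ and extends to a plane or point-pencil clique of size $\geq q^2+q+1$; deleting the (at most one) vertex of $\cP$ from this larger clique still leaves a clique of $\Gamma_\pi$ strictly containing $K$, contradicting maximality. If $|K \cap \cP| \geq 2$, with $m_1, m_2 \in K \cap \cP$ meeting at $p \in \alpha$, then every other vertex of $K$ must be an external line through $\pi^{-1}(p)$, and $K$ embeds in the mixed clique consisting of the $q+1$ lines of $\alpha$ through $p$ together with all external lines through $\pi^{-1}(p)$---a clique of size $(q^{n-1}-1)/(q-1) > q+2$, again contradicting maximality. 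Thus every maximal $(q+2)$-clique $K$ meets $\cP$ in exactly one vertex $\ell$, and this $\ell$ is singled out graph-theoretically: the $q+1$ externals of $K$ pairwise meet, hence are concurrent or coplanar, whence $K \setminus \{\ell\}$ extends to a point-pencil, plane clique, or mixed clique far larger than $q+2$; removing any external vertex instead leaves a $(q+1)$-set whose only extension is back to $K$.

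Letting $T \subseteq \cP$ be the set of these identified $\cP$-vertices across all maximal $(q+2)$-cliques of $\Gamma_\pi$, we have $\ell_1, \ell_2, \ell_3 \in T$. A vertex $v \notin \cP$ adjacent in $\Gamma_\pi$ to all three $\ell_i$ would satisfy $\pi(v \cap \alpha) \in \ell_1 \cap \ell_2 \cap \ell_3 = \emptyset$, which is impossible; so every clique of $\Gamma_\pi$ containing $T$ lies within $\cP$, and $\cP$ is the unique maximal clique containing $T$. This makes $\cP$ graph-theoretically recognisable. The main obstacle is the rigidity of maximal $(q+2)$-cliques in the classification step, which reduces to the classical fact that three or more pairwise meeting lines in $\PG(n-1,q)$ must be either concurrent or coplanar.
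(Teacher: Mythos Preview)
Your overall plan---classify the maximal cliques of size at most $q+2$ as hitting $\cP$ in exactly one vertex, then use the three non-concurrent lines $\ell_1,\ell_2,\ell_3$ from the previous lemma---matches the paper. The discrepancy is in how you pass from this classification to a graph-theoretic description of $\cP$.

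Your ``singling-out'' step is not correct as stated. You claim that for a maximal $(q+2)$-clique $K=\{\ell,m_1,\dots,m_{q+1}\}$ with $m_j=\overline{r\,p_j}$ and $\pi^{-1}(\ell)=\{p_1,\dots,p_{q+1}\}$, removing an external $m_1$ leaves a set whose \emph{only} extension is back to $K$. But if $p_2,\dots,p_{q+1}$ happen to be collinear on a line $L\subset\alpha$, then all of $m_2,\dots,m_{q+1}$ lie in the plane $\beta=\langle r,L\rangle$, and every external line of $\beta$ through some $p_j$ ($j\geq 2$) is adjacent in $\Gamma_\pi$ both to $\ell$ and to each $m_k$; together with $\ell$ these form a clique of size $q^2+1>q+2$. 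So $K\setminus\{m_1\}$ extends well beyond $K$. For $q=2$ this bad case occurs for \emph{every} $m_j$ (any two points are collinear), so your criterion never isolates $\ell$. Consequently your set $T$ need not be contained in $\cP$, and the argument that $\cP$ is ``the unique maximal clique containing $T$'' breaks down.

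The paper sidesteps this issue entirely: rather than trying to pick out the $\cP$-vertex inside each small clique, it characterises $\cP$ directly as the unique maximal clique $C$ such that every maximal clique of size at most $q+2$ meets $C$ in exactly one vertex. Uniqueness is then argued by letting $p\notin\alpha$ vary: if $C$ misses one of the $\ell_i$, then for every such $p$ the clique $K_i(p)$ forces $C$ to contain an external line through $p$, so the lines of $C$ cover all of $\PG(n-1,q)\setminus\alpha$; this forces $C$ to be a (shifted) point-pencil, which then violates the ``exactly one'' property for a suitably chosen $K_i(p)$. Your route can be repaired---e.g.\ by declaring $\ell$ to be the unique vertex of $K$ whose deletion extends to a clique of size at least $[n-1]_q$, since $q^2+1<[n-1]_q$ for $n\geq 4$---but as written the singling-out step is a genuine gap.
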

\begin{proof}
    We prove that
    \begin{center}
        \emph{maximal cliques of size at most \(q+2\) contain exactly one vertex of \(\cP\)}
    \end{center}
    by contraposition.

    If a maximal clique is disjoint from \(\cP\), it is necessarily contained in a maximal clique of \(\Gamma\) with its vertices in \(\cP\) removed. Such a clique is either a point-pencil or its dual (all lines in a given plane). A point-pencil through a point \(p\in\alpha\) without its lines in \(\alpha\) is not maximal as we can add the point-pencil through \(p^\pi\) in \(\alpha\) (call this a shifted point-pencil). A point-pencil through a point \(p\notin\alpha\) has \(\frac{q^{n-1}-1}{q-1}\) lines, a plane-pencil has size \(q^2+q+1\) and has an intersection of at most one line with \(\cP\).

    If a maximal clique contains at least two vertices of \(\cP\), it is either \(\cP\) itself or a shifted point-pencil as before.

    Now we claim that \(\cP\) is characterised by the above property. Suppose that \(C\) is a maximal clique with the property. If \(\ell\) is a line in \(\alpha\) such that \(\ell^{\pi^{-1}}\) is not a line, and if \(p\) is a point not in \(\alpha\), then \[\{\ell\}\cup\{pq\colon\, q^\pi\in\ell\}\] is a maximal clique of size \(q+2\). Now, use the above lemma to find three non-collinear such lines. If \(C\) contains all three of them, then it is equal to \(\cP\). If not, we can use the property on the maximal cliques of size \(q+2\) and conclude that there is a line in \(C\) that goes through \(p\). But since \(p\notin\alpha\) can be chosen arbitrarily, the union of the lines of \(C\) must cover all points of PG\((n-1,q)\setminus\alpha\). So it must be a point-pencil, which this contradicts the ``exactly one'' part of the property.
\end{proof}

\begin{lemma}\label{lem:doublecoset}
    We have \(\Gamma_\pi\cong\Gamma_\tau\) if and only if \(\pi\) and \(\tau\) are in the same double coset of \(P\Gamma L(\alpha)\) (acting on the lines) in Sym\((\cP)\).
\end{lemma}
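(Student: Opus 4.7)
My plan is to prove both directions by tying collineations of $\alpha$ to graph isomorphisms between the switched graphs. Recall from the setup that, in $\Gamma_\pi$, a line $y\notin\cP$ with $y\cap\alpha=\{p\}$ is adjacent within $\cP$ to exactly the lines of $\alpha$ through $\pi(p)$, a line disjoint from $\alpha$ has no neighbour in $\cP$, the induced subgraph on $\cP$ remains complete, and intersections among lines not in $\alpha$ are unchanged by switching.

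For the ``if'' direction, suppose $\tau=\sigma_1\pi\sigma_2$ with $\sigma_1,\sigma_2\in P\Gamma L(\alpha)$. I would set up two building blocks. First, for any $\sigma\in P\Gamma L(\alpha)$, relabeling $\cP$ by the induced line-permutation $\sigma_L$ while fixing all vertices outside $\cP$ is an isomorphism $\Gamma_\pi\to\Gamma_{\sigma\pi}$: the only nontrivial check is the mixed adjacency, where $\pi(p)\in\ell$ is equivalent to $\sigma(\pi(p))\in\sigma_L(\ell)$. Second, $\sigma$ extends to a collineation $\tilde\sigma$ of $\PG(n-1,q)$, which induces an automorphism of $\Gamma$ stabilising $\cP$ setwise, and a direct check tracing the three types of adjacency shows that $\tilde\sigma$ carries $\Gamma_\pi$ to $\Gamma_{\sigma\pi\sigma^{-1}}$. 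Chaining these with $\sigma=\sigma_2$ and then $\sigma=\sigma_1\sigma_2$ gives $\Gamma_\pi\cong\Gamma_{\sigma_2^{-1}\pi\sigma_2}\cong\Gamma_{\sigma_1\pi\sigma_2}=\Gamma_\tau$.

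For the ``only if'' direction, fix an isomorphism $\phi\colon\Gamma_\pi\to\Gamma_\tau$. By the preceding lemma, $\phi(\cP)=\cP$, so $\rho_L:=\phi|_\cP$ is a permutation of the lines of $\alpha$. For $y\notin\cP$ with $y\cap\alpha=\{p\}$, the neighbourhood $N_{\Gamma_\pi}(y)\cap\cP$ is the pencil through $\pi(p)$; comparing with the analogous description in $\Gamma_\tau$ forces $\phi(y)\cap\alpha=\{p'\}$ for a single point $p'\in\alpha$, and $\rho_L$ sends the pencil through $\pi(p)$ to the pencil through $\tau(p')$. Since $\pi$ is a permutation of the points of $\alpha$, $\rho_L$ maps every point-pencil to a point-pencil and therefore, by a standard dual fundamental-theorem argument on $\alpha$, is induced by a unique collineation $\rho\in P\Gamma L(\alpha)$ with $\rho(\pi(p))=\tau(p')$. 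Setting $\sigma(p):=p'$ yields $\sigma=\tau^{-1}\rho\pi$ and hence $\tau=\rho\pi\sigma^{-1}$, so everything reduces to proving $\sigma\in P\Gamma L(\alpha)$.

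This final step is the main obstacle. For it, I would exploit that $\phi$ restricted to the lines outside $\alpha$ is a bijection preserving pairwise intersection (switching leaves those adjacencies intact) which respects the partition indexed by intersection with $\alpha$, sending the class of lines through $p$ onto the class of lines through $\sigma(p)$. For $n\ge 4$ the whole projective space is recoverable from this data: a point $p\in\alpha$ is encoded by the clique $\{y\notin\cP:y\cap\alpha=\{p\}\}$, while a point outside $\alpha$ is encoded by the maximal star of non-$\alpha$ lines through it, with incidence read off from intersection. A fundamental-theorem-of-projective-geometry type argument on this recovered point-line structure then shows that $\phi|_{V\setminus\cP}$ extends to a collineation of $\PG(n-1,q)$ stabilising $\alpha$ setwise, whose restriction to $\alpha$ is precisely $\sigma$, placing $\sigma$ in $P\Gamma L(\alpha)$ as required.
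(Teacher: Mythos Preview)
Your proposal is correct and follows essentially the same route as the paper. Both arguments use the preceding recognisability lemma to get $\phi(\cP)=\cP$, observe that $\phi|_{\cP}$ permutes point-pencils and hence comes from a collineation of $\alpha$, and then extend $\phi|_{V\setminus\cP}$ to a collineation of the ambient $\PG(n-1,q)$ via the fundamental theorem, combining the two pieces into the double-coset relation. The paper packages the conclusion as a single equation $\pi f = g\tau$, whereas you split the two collineations into $\rho$ and $\sigma$ and verify $\sigma\in P\Gamma L(\alpha)$ separately, but this is only a difference in bookkeeping. One place where the paper is slightly sharper: to recover points outside $\alpha$ it names the concrete invariant (cliques of size $[n-1]_q$ disjoint from $\cP$), which you leave as ``maximal star of non-$\alpha$ lines''; making that criterion explicit would tighten your final paragraph. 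Also, in your chaining for the ``if'' direction you want $\sigma=\sigma_2^{-1}$ rather than $\sigma=\sigma_2$ in the first step.
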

\begin{proof}
    Note that \(\pi\in P\Gamma L(\alpha)\Longleftrightarrow\Gamma_\pi\cong\Gamma\) because if \(\ell\) is a line such that \(\ell^{\pi^{-1}}\) is not a line, and if \(p\) is a point not in \(\alpha\), then \[\{\ell\}\cup\{pq\colon\, q^\pi\in\ell\}\] is a maximal clique of size \(q+2\). But maximal cliques in \(\Gamma\) have size at least \(q^2+2+1\).

    So suppose that \(\pi,\tau\notin P\Gamma L(\alpha)\) and let \(f:\Gamma_\pi\to\Gamma_\tau\) be an isomorphism. By the above lemma, \(\cP\) is recognisable in both graphs, so \(f(\cP)=\cP\).

    Now, \(f\) maps point-pencils in \(\alpha\) to point-pencils in \(\alpha\), since those are exactly the cliques of size \(q+1\) in \(\cP\) that can be extended to larger cliques in the full graph. So \(f|_\cP\in P\Gamma L(\alpha)\) (acting on the lines).

    The point-pencils in PG\((n-1,q)\) through a point \(p\notin\alpha\) are also mapped to such point-pencils again, since they are the only cliques of size \(\frac{q^{n-1}-1}{q-1}\) disjoint from \(\cP\). So \(f|_{V(\Gamma)\setminus\cP}\) preserves collinear lines and can be uniquely extended to a collinearity \(g\in P\Gamma L(n,q)\).

    We show that \(\pi f=g \tau\) as maps on the lines of \(\alpha\), thereby proving the claim. Pick a line \(\ell\in\cP\). Let \(p\) be an arbitrary point on \(\ell\) and let \(m\) be a line through \(p\). Then \(m\sim\ell^\pi\), so \(f(m)\sim f(\ell^\pi)\). In other words, \(f(\ell^\pi)\) goes through \((\alpha\cap f(m))^\tau=g(p)^\tau\). So \(f(\ell^\pi)=g(\ell)^\tau\).
\end{proof}

\begin{theorem}\label{thm:many_Johnson}
    There are at least $q!$ non-isomorphic graphs with the same spectrum as $J_q(n, 2)$, \(n\geq4\).
\end{theorem}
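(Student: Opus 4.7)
The plan is to combine the isomorphism lemma just proved with a crude counting bound. By that lemma, two permutations $\pi,\tau\in\Sym(\cP)$ yield isomorphic graphs $\Gamma_\pi\cong\Gamma_\tau$ exactly when they lie in the same double coset of $H:=\mathrm{P\Gamma L}(\alpha)\leq\Sym(\cP)$, where $H$ acts faithfully on the $q^2+q+1$ lines of $\alpha$. Since every $\Gamma_\pi$ is $\mathbb{R}$-cospectral with $J_q(n,2)$ by Corollary~\ref{cor:samedesign} applied to the design $(\cP,\cB)$, it is enough to exhibit at least $q!$ distinct double cosets.

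The first (and essentially the only) step I would take is the elementary observation that every double coset $H\pi H$ has cardinality at most $|H|^2$, which gives
\[
|H\backslash\Sym(\cP)/H|\;\geq\;\frac{(q^2+q+1)!}{|H|^2}.
\]
Plugging in the explicit order $|H|=e\cdot q^3(q-1)^2(q+1)(q^2+q+1)$ (with $q=p^e$), this lower bound already exceeds $q!$ for every $q\geq 3$: the small cases $q=3,4$ can be checked by direct computation, and asymptotically $(q^2+q+1)!$ grows like $\exp(\Theta(q^2\log q))$ while $q!\cdot|H|^2$ grows only like $\exp(\Theta(q\log q))$, so the inequality becomes very loose for large $q$.

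The case $q=2$ needs separate treatment, since then $|H|^2=168^2>5040=7!$ and the pigeonhole bound fails. Here Example~\ref{ex:fano} has already enumerated the four double cosets attached to the Fano plane, providing $4\geq 2!$ non-isomorphic graphs $\Gamma_\pi$ and closing the gap.

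The obstacle I expect is essentially aesthetic: the specific bound $q!$ in the statement hints at an explicit construction, for instance by permuting the $q$ affine points of some chosen line of $\alpha$ and showing that the resulting $q!$ permutations land in pairwise distinct double cosets. Producing such concrete representatives would be much more illuminating than the pigeonhole argument, but this turns out to be delicate because $\mathrm{P\Gamma L}(3,q)$ is $2$-transitive on the points of $\alpha$ and standard invariants like cycle type are not double-coset invariants, so I would default to the counting argument above.
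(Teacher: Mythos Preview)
Your approach—bounding the number of double cosets below by $(q^2+q+1)!/|P\Gamma L(\alpha)|^2$ and then invoking the preceding isomorphism lemma—is precisely what the paper does. You are in fact more careful than the paper, whose one-line proof only records $|P\Gamma L(\alpha)|<q^9$ without isolating $q=2$; your separate handling of that case via Example~\ref{ex:fano} patches what is arguably a small gap there.
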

\begin{proof}
    There are at least \(q!\) double cosets of \(P\Gamma L(2,q)\) in Sym\((q^2+q+1)\) because \(|P\Gamma L(2,q)|<q^9\) and \(|\Sym(q^2+q+1)|=(q^2+q+1)!\). Applying Lemma~\ref{lem:doublecoset} gives the result.
\end{proof}

\subsection{Polar spaces}

The goal of this section is to fit several constructions from finite classical polar spaces into our framework.
In particular, we explain the construction in \cite[Section~7.1]{BIK} algebraically.
It also provides a new perspective on other publications such as \cite{Guo,I,IM}.

Let $q$ be a prime power.
There are five types of finite classical polar spaces: $O^+(2d, q)$, $O(2d+1, q)$, $O^-(2d+2)$,
$U(n, \sqrt{q})$, $Sp(2d, q)$, where $d$ and $n$ are positive integers. They are related to various strongly regular graphs, see \cite[Chapter~2 and Chapter~3]{BvM}. Each of the finite classical polar spaces can be defined by a suitable form over GF$(q)$:
\begin{itemize}
 \item For $O^+(2d, q)$, the quadratic form $Q(x) = x_1 x_2 + \cdots + x_{2d-1} x_{2d}$.
 \item For $O(2d+1, q)$, the quadratic form $Q(x) = x_1^2 + x_2x_3 + \cdots + x_{2d} x_{2d+1}$.
 \item For $O^-(2d+2, q)$, the quadratic form $Q(x) = x_1^2 + \alpha x_1 x_2 + \beta x_2^2 + \cdots + x_{2d+1} x_{2d+2}$,
        where $x^2 + \alpha x + \beta$ is irreducible over GF$(q)$.
 \item For $U(n, \sqrt{q})$, the Hermitian form $\sigma(x, y) = x_1 y_1^{\sqrt{q}} + \cdots + x_n y_n^{\sqrt{q}}$.
 \item For $Sp(2d, q)$, the symplectic form $\sigma(x, y) = x_1 y_2 - x_2 y_1 + \cdots + x_{2d-1} y_{2d} - x_{2d} y_{2d-1}$.
\end{itemize}
We say that a vector $x$ is \textit{singular} with respect to $Q$ if $Q(x) = 0$. A vector \(x\) is \textit{isotropic} if with respect to $\sigma$ if $\sigma(x, x) = 0$.
More generally, a subspace is \textit{totally singular} if all its vector are singular
and a subspace $S$ is \textit{totally isotropic} if $\sigma(x, y) = 0$ for all $x, y \in S$.
For simplicity, we use \textit{isotropic} as synonym for singular, totally singular,
and totally isotropic.
Unless $q$ is even, we can associate the finite classical polar space with
a polarity $\perp$ of $\mathrm{PG}(n-1, q)$, that is, $\perp$ is an incidence preserving
bijection of order two on the subspaces of $\mathrm{PG}(n-1, q)$
with $\dim(U^\perp) = \mathrm{codim}(U)$ for any subspace.
For a subspace $S$, the \textit{radical} of $S$ is the
set of vectors $x \in S$ such that $\< x, y \>$ is isotropic
for all $y \in S$. If $\perp$ exists, then $S \cap S^\perp$ is the radical of $S$.

We write $[k]_q = \frac{q^k-1}{q-1}$ for the number of points of PG$(k-1,q)$.

\subsubsection{Adjacent when orthogonal}\label{sec:orthogonal}

Let $\Gamma$ be a graph whose vertex set $\V$ is a subset of the points of PG\((n-1,q)\), where two distinct points $x$ and $y$ are adjacent if $x \in y^\perp$.

\begin{lemma}\label{lemma:polar_orth}
    Let $U$ be a subspace of PG\((n-1,q)\) of dimension $k-1$ and let \(\perp\) be a polarity of PG\((n-1,q)\). The restriction of the point-hyperplane design of $U$ to $\V \cap U$ or $\V \cap U \setminus U^\perp$ is an \((r=[k-1]_q,\lambda=[k-2]_q)\)-design that satisfies the (relaxed, see Remark~\ref{remark:main}(1)) condition (ii') of Theorem \ref{thm:main}.
\end{lemma}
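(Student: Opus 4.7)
The plan is to verify the two claims directly: the design parameters are inherited from the classical point-hyperplane design by the trivial observation that restricting the point set of a $2$-design preserves $r$ and $\lambda$, and condition (ii') follows from the dimension dichotomy $\dim(U\cap x^\perp)\in\{\dim U,\dim U-1\}$ for a vertex $x$ outside $C$.

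First, I would recall that the point-hyperplane design of $U\cong\mathrm{PG}(k-1,q)$ is a $2$-$([k]_q,[k-1]_q,[k-2]_q)$-design: every point of $U$ lies on exactly $[k-1]_q$ hyperplanes of $U$, and every pair of points on exactly $[k-2]_q$ hyperplanes. Restricting the point set to any subset $\cP'\subseteq U$ while keeping each of the $[k]_q$ hyperplanes of $U$ as a block (now with $H\cap\cP'$ as the block) leaves $r$ and $\lambda$ unchanged, since these are counts of blocks through points and pairs of points of $\cP'$. The blocks may become repeated or of varying sizes, which is explicitly allowed by Remark~\ref{remark:main}(1). Taking $\cP'=V(\Gamma)\cap U$ or $\cP'=V(\Gamma)\cap U\setminus U^\perp$ gives an $(r=[k-1]_q,\lambda=[k-2]_q)$-design in either case.

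Next, for condition (ii'), I fix $x\in V(\Gamma)\setminus C$ and analyse $U\cap x^\perp$. Since $x^\perp$ has projective codimension one in $\mathrm{PG}(n-1,q)$, this intersection is either all of $U$ (precisely when $x\in U^\perp$, by the involution property of $\perp$) or a hyperplane $H$ of $U$. In the first case, every vertex of $C$ lies in $x^\perp$, and $x$ is adjacent to all of $C$; here $x\neq y$ for any $y\in C$ since $x\notin U$ in the non-variant case, and $C\cap U^\perp=\emptyset$ in the variant. This is case (a) of (ii'). In the second case, the neighbours of $x$ in $C$ are exactly $H\cap C$, which is precisely a block of the restricted design, giving case (c) of (ii').

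Finally, for the variant $C=V(\Gamma)\cap U\setminus U^\perp$, one more subcase remains: a vertex $x$ could lie in $V(\Gamma)\cap U\cap U^\perp$, i.e.\ in the radical of $U$ intersected with $V(\Gamma)$. Such $x$ satisfies $U\subseteq x^\perp$, so $x$ is adjacent to every vertex of $V(\Gamma)\cap U\setminus\{x\}$ and in particular to every vertex of $C$ (none of which equals $x$). This is again case (a). I do not anticipate a real obstacle; the argument rests on the polar dimension dichotomy together with the elementary observation about restrictions of $2$-designs, and the only care needed is to track separately the ``all of $C$'' case arising from the radical in the variant.
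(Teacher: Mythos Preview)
Your proposal is correct and follows essentially the same approach as the paper: both verify the $(r,\lambda)$ parameters via the observation that deleting points from a $2$-design leaves $r$ and $\lambda$ unchanged, and both deduce condition (ii') from the dimension dichotomy for $U\cap x^\perp$. You are in fact slightly more careful than the paper, which only treats vertices $x\notin U$ and does not explicitly address the case $x\in V(\Gamma)\cap U\cap U^\perp$ in the variant $C=V(\Gamma)\cap U\setminus U^\perp$; your final paragraph handles this correctly.
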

\begin{proof}
    Within \(U\), every point is contained in \([k-1]_q\) hyperplanes and every line is contained in \([k-2]_q\) hyperplanes. Moreover, deleting points of the design does not change $r$ or $\lambda$.

    For a point $x\notin U$, the hyperplane $x^\perp$ intersects $U$ in either a hyperplane of $U$ or $U$ itself. Thus \(x\) is adjacent to a block of the point-hyperplane design of $U$ or all points of $U$. If \(x\in U^\perp\), then it is adjacent to all points of \(U\).
\end{proof}

There are the following natural choices for $\V$:
\begin{itemize}
 \item All isotropic points. Note that we can include $q$ even and quadratic forms if we require that $\< x, y\>$ is singular.
 \item All anisotropic points.
 \item For quadratic forms and $q$ odd, one isometry type of anisotropic points.
\end{itemize}
For applying Theorem \ref{thm:main}, we need to find a subspace $U$ such that for $C=\V\cap U$ or $C=\V\cap U\setminus U^\perp$ (as in Lemma \ref{lemma:polar_orth}) the matrix $R^T A_C R$ is an adjacency matrix. The easiest case is when $C$ is a clique or a coclique.

\begin{example}
    If $U$ is a totally isotropic subspace and $C = \V \cap U$, then \(C\) is a clique. Thus we obtain cospectral graphs. If $U$ is a plane or bigger, then we can hope for non-isomorphy.
    There are two families of strongly regular graphs related to this case. If $\V$ consists of all isotropic points, then this is the example in \cite[Section~7.1]{BIK}. If the polar space is a hyperbolic quadric and $\V$ consists of all isotropic points minus one maximal isotropic subspace, then it is the construction in \cite{OtherBIK}.
\end{example}

\begin{example}
 Suppose that $\V$ consists of anisotropic points only. Take a subspace $U$ such that \(U\cap U^\perp\) is a hyperplane of \(U\). If we put $C = \V \cap U \setminus U^\perp$, then $C$ is a coclique. Thus we obtain cospectral graphs. Again, if $U$ is a plane or bigger, then we can hope for non-isomorphy.

 If $\V$ consists of all anisotropic points and the polarity is Hermitian, then $\Gamma$ is the strongly regular graph $NU_n^\perp(q)$ in the notation of \cite{BvM}.
 If $\perp$ is quadratic and $\V$ consists of one isometry type, then $\Gamma$ is a strongly regular graph in some cases for $q \in \{3,5\}$.
 This generalizes several constructions in \cite{IM}.
\end{example}

\begin{example}
  Suppose that $\perp$ is symplectic or Hermitian and that $\V$ consists of all isotropic points. Take a subspace $U$ such that \(U\cap U^\perp\) has codimension 2 in \(U\). If $C =\V\cap U\setminus U^\perp$, then the induced subgraph on $C$ is a disjoint union of cliques of size $q^{\dim(U)}$. We have $q+1$ cliques if the polar space is symplectic and $\sqrt{q}+1$ cliques if it is Hermitian.
  For $n=5$ and $\dim(U)=2$, this is the construction in \cite{Guo}.

  Note that this construction is also feasible for quadrics, but here we only obtain $2$ cliques. The collineation group contains an element exchanging these two, so here no non-isomorphic graphs can be found.
\end{example}

\subsubsection{Adjacent when tangential}

Let $\Gamma$ be a graph whose vertex set \(\V\) is a subset of points of PG$(n-1,q)$, where two distinct points $x$ and $y$ are adjacent if the line $\<x,y\>$ spanned by them, is a tangent (that is, $\<x,y\>$ contains precisely one isotropic point). As before, if $G$ is the subgroup of $\mathrm{P\Gamma{}L}(n, q)$ that stabilises $\V$ set-wise, then $G$ induces a subgroup of $\mathrm{Aut}(\Gamma)$.

In the notation of \cite{BvM}, the strongly regular graph $NU_n(q)$ equals $\Gamma$ if $\V$ is the set of all anisotropic points and $\perp$ is Hermitian.
For $n \geq 5$, one can use WQH-switching to show that $NU_n(q)$ is not determined by its spectrum \cite{IPS}.

Below, we give a generalization of the method, in a similar fashion as in the Section~\ref{sec:orthogonal}.

\begin{example}
    Suppose that $\V$ consists of all anisotropic points. Take a subspace $U$ such that \(R=U\cap U^\perp\) is a hyperplane in \(U\). If we define $C=\V\cap U\setminus U^\perp$, then the induced subgraph on $C$ is complete, because for any $x,y\in C$, we have that $\<x,y\>$ is a tangent with $\<x,y\>\cap U^\perp$ as its isotropic point. As $G$ acts transitively on pairs of vertices of $C$, the parameters $r$ and $\lambda$
    necessary for Theorem \ref{thm:main} exist and we can apply it. More specifically, the point set of the design is $C$,
    while the blocks are
    \[
     B_x = \{ y \in C\colon\, x \text{ and } y \text{ are adjacent} \},
    \]
    where $x \in V(\Gamma) \setminus C$.

    Let us discuss the case of $U$ being a projective plane (a vector space of dimension 3), the smallest interesting case, in more detail. For an anisotropic point $x\notin U$, $W=\<U,x\>$ is a solid with the radical $N$ being a point, a line, or a plane.

 If $N$ is a plane, then $x$ is on $q^2+q+1$ tangents in $W$ and
 hence adjacent to all of $C$. Similarly, if $N$ is a line,
 then $x$ is on $0$ tangents in $W$.

 If $N$ is a point (necessarily, $N \subseteq R$), then $x$ lies on $q\sqrt{q}+q+1$ tangents,
 one of them meeting $U$ in $N$, the other ones corresponding to
 $\sqrt{q}+1$ lines through $N$. These form a $2$-$(q^2, q(\sqrt{q}+1), \sqrt{q})$-design.
\end{example}

\section{Conclusion}

Theorem~\ref{thm:main} yields a framework for finding switching methods for the construction of cospectral graphs, using the language of \((r,\lambda)\)-designs. It unifies many known and new switching methods. Table~\ref{tab:overview} provides an overview of the switching methods that can be derived from this approach and that were mentioned in this paper.

\begin{table}[h]
    \centering
    \begin{tabular}{l|r|r|l|l}
        Method name (if applicable) & \(v\) & Level & Reference & Status \\
        \hline\hline
        GM$_{2m}$ switching & \(2m\) & \(m\) & Theorem~\ref{thm:GM} & known\\
        GM$_{4+\dots+4}$ switching & \(4t\) & \(2\) & Theorem~\ref{thm:GM} & known\\
        GM$_{6+4}$ switching & \(10\) & \(6\) & Example~\ref{ex:GM64} & known\\
        WQH$_{2m}$ switching & \(2m\) & \(m\) & Theorem~\ref{thm:WQH} & known\\
        AH$_{2m}$ switching & \(2m\) & \(2\) & Section~\ref{sec:AH} & known\\
        Fano switching & \(7\) & \(2\) & Example~\ref{ex:fano} & known\\
        Cube switching & \(8\) & \(2\) & Example~\ref{ex:AG32} & known\\
        \(Q(3,3,1)\) switching \cite[Section~4.2]{ABScounting} & \(9\) & \(3\) & Example~\ref{ex:AG23} & known\\
        \hline
        Singer cycle switching & \(q^2+q+1\) & \(q\) & Theorem~\ref{thm:singer} & new\\
        & \(7\) & \(4\) & Example~\ref{ex:new7} & new\\
        & \(8\) & \(3\) & Example~\ref{ex:new8} & new\\
        & \(8\) & \(5\) & Example~\ref{ex:new8b} & new\\
        AG$(2,3)$ switching & \(9\) & \(3\) & Example~\ref{ex:AG23} & new\\
        Paley biplane switching & \(11\) & \(3\) & Example~\ref{ex:paley} & new\\
        $\PG(2,3)$ switching & \(13\) & \(3\) & Example~\ref{ex:PG23} & new\\
    \end{tabular}
    \caption{Switching methods from designs on \(v\) points, mentioned in this paper.}
    \label{tab:overview}
\end{table}

\paragraph{Acknowledgement.}
We thank the reviewers for their very helpful comments.
Robin Simoens is supported by the Research Foundation Flanders (FWO) through the grant 11PG724N.


\end{document}